\documentclass[a4paper]{scrartcl}
	\usepackage[utf8]{inputenc}
	\usepackage[T1]{fontenc}
	\usepackage[UKenglish]{babel}
	\usepackage{lmodern}
	\usepackage{scrpage2}				
		\pagestyle{scrheadings}
		\clearscrheadfoot
		\ohead{\thepage}
		\ihead{\rightmark}
		\automark[section]{chapter}
		\setheadsepline{1pt}
	\addtokomafont{sectioning}{\rmfamily}   
	\usepackage{graphicx}
	\usepackage{subfigure}
	\usepackage[font=scriptsize]{caption}
	\usepackage{epstopdf}
	\usepackage{multicol}
	\usepackage{float}
	\usepackage{wrapfig}
	
	\usepackage{amsmath, amssymb, amsfonts, amsthm, mathtools}	
	\usepackage{hyperref}
	\usepackage[capitalize, nameinlink, noabbrev]{cleveref}
	\crefname{equation}{}{}
	\usepackage{url}
	
	
\newtheoremstyle{defi}
{10pt}
{10pt}
{\itshape}
{}
{\normalfont \bfseries}
{:}
{ }
{}

\theoremstyle{defi}
\newtheorem{defi}{Definition}

\newtheoremstyle{exam}
{10pt}
{10pt}
{}
{}
{\normalfont \bfseries}
{:}
{ }
{}

\theoremstyle{exam}

\newtheoremstyle{rema}
{10pt}
{10pt}
{}
{}
{\normalfont \bfseries}
{:}
{ }
{}

\theoremstyle{rema}
\newtheorem{rema}[defi]{Remark}

\newtheoremstyle{theo}
{10pt}
{10pt}
{\itshape}
{}
{\normalfont \bfseries}
{:}
{ }
{}

\theoremstyle{theo}
\newtheorem{theo}[defi]{Theorem}

\newtheoremstyle{lemm}
{10pt}
{10pt}
{\itshape}
{}
{\normalfont \bfseries}
{:}
{ }
{}

\theoremstyle{lemm}

\newtheoremstyle{coro}
{10pt}
{10pt}
{\itshape}
{}
{\normalfont \bfseries}
{:}
{ }
{}

\theoremstyle{coro}

\newtheoremstyle{algo}
{10pt}
{10pt}
{}
{}
{\normalfont \bfseries}
{:}
{ }
{}

\theoremstyle{algo}



\newcommand{\real}{\mathbb{R} }

\newcommand{\sint}{\mathbb{Z} }
\newcommand{\uint}{\mathbb{N} }
\newcommand{\ball}{\mathbb{B}}

\newcommand{\lon}{\varphi}
\newcommand{\lat}{\theta}
\newcommand{\era}{\varepsilon^r}
\newcommand{\ephi}{\varepsilon^\lon}
\newcommand{\ete}{\varepsilon^t}

\newcommand{\pdervlon}{\frac{\partial}{\partial \lon}}
\newcommand{\pdervt}{\frac{\partial}{\partial t}}
\newcommand{\pdervnu}{\frac{\partial}{\partial \nu}}

\newcommand{\belt}{\Delta^*}


\setlength{\tabcolsep}{3pt} 
\setlength{\arraycolsep}{2pt}
{
\setcounter{MaxMatrixCols}{15}

\addto\captionsUKenglish{

}

\begin{document}
\parindent 0pt

\setlength\abovedisplayshortskip{0pt}
\setlength\belowdisplayshortskip{0pt}
\setlength\abovedisplayskip{4pt}
\setlength\belowdisplayskip{4pt}

\begin{center}
\begin{LARGE}
\textbf{Vectorial Slepian Functions on the Ball}\\
\end{LARGE}
\vspace{0.5cm}
V. Michel, S. Orzlowski, N. Schneider\\
Geomathematics Group, Department of Mathematics, University of Siegen, Germany
\end{center}

\paragraph{Abstract} 

Due to the uncertainty principle, a function cannot be simultaneously limited in space as well as in frequency.  The idea of Slepian functions in general is to find functions that are at least optimally spatio-spectrally localised. Here, we are looking for Slepian functions which are suitable for the representation of real-valued vector fields on a three-dimensional ball.
We work with diverse vectorial bases on the ball which all consist of Jacobi polynomials and vector spherical harmonics. Such basis functions occur in the singular value decomposition of some tomographic inverse problems in geophysics and medical imaging, see \cite{MichelOrzlowski2016}. Our aim is to find bandlimited vector fields that are well-localised in a part of a cone whose apex is situated in the origin. Following the original approach towards Slepian functions, the optimisation problem can be transformed into a finite-dimensional algebraic eigenvalue problem. The entries of the corresponding matrix are treated analytically as far as possible. For the remaining integrals, numerical quadrature formulae have to be applied. The eigenvalue problem decouples into a normal and a tangential problem.
The number of well-localised vector fields can be estimated by a Shannon number which mainly depends on the maximal radial and angular degree of the basis functions as well as the size of the localisation region. 
We show numerical examples of vectorial Slepian functions on the ball, which demonstrate the good localisation of these functions and the accurate estimate of the Shannon number. \\

\textit{Keywords:} ball, bandlimited functions, eigenvalue problem, Jacobi polynomials, spatio-spectral localisation, spectral analysis, vector spherical harmonics, vectorial Slepian concentration problem \\

\textbf{MSC:} 33C45, 33C47, 33C50, 41A10, 41A30, 42C99, 65T99, 86-08

\section{Introduction}

An example for a tomographic problem in mathematics with an unknown vector field can be obtained from medical imaging, more explicitly neuroscience. 
The effects of the neural currents in the brain can be measured by means of magnetoencephalography, MEG, or electroencephalography, EEG. As it was shown in \cite{MichelOrzlowski2016}, the inversion of EEG-MEG-data is mathematically related to other inverse problems, for example, in the geosciences.
In a mathematical model, the unknown currents can be represented as vectorial functions on a ball. 
Note that the neural currents exist in the cerebrum which is a proper subset of the interior of the scalp. 

For tomographic inverse problems, where the unknown is a function on the ball, several different approaches have been used up to now for the construction of localised trial functions including wavelet- and spline-based methods, see, for instance, \cite{Tomo1, Tomo4, FokasHaukMichel2012, Tomo6} \cite{Tomo7, Tomo8, Michel2015_2_Handbook, Tomo10, Tomo11, Tomo12}.
The more challenging task of finding localised vectorial functions on the ball has, however, only rarely been tackled so far, although vectorial tomographic problems on balls and similar geometries occur in a series of applications (see e.g. \cite{Fokas2, Fokas3, TomoApp1, FokasHaukMichel2012, Fokas1, TomoApp2, TomoApp3, TomoApp5, Sarvas, TomoApp4}).

One out of many types of localised trial functions is represented by Slepian functions.
Such functions have been constructed in several settings. At first, localised functions on the real axis have been presented by Landau, Pollak and Slepian in \cite{LandauPollakSlepian2, LandauPollakSlepian4, LandauPollakSlepian1}. Later, scalar Slepian functions on the sphere, for example \cite{Sansoeetal, Simons2006}, and on the ball, for example \cite{Khalid2016}, as well as Slepian vector fields on the sphere, for example \cite{JahnBokor2014, Simons2014}, were studied. Slepian functions have also been used for inverse problems on the sphere, see, for example \cite{SlepiansNeu1, SlepiansNeu2}.
Other studies on Slepian functions, for example \cite{Gruenbaumetal, ManiarMitra, Miranian}, emphasize more specific details. With respect to the MEG-problem, vectorial Slepian functions have been discussed in \cite{ManiarMitra2}. 
In this approach, a (physically motivated) reproducing kernel and the sensor positions are used to generate vectorial basis functions on the ball (in the nomenclature of \cite{Tomo1, BerkelFischerMichel, FokasHaukMichel2012}, these functions could be interpreted as spline basis functions). The Slepian functions are then derived as optimally localised expansions in this `spline' basis. We will elaborate here an alternative ansatz by using some known orthonormal bases on the ball. The reason is that, in a forthcoming publication, we want to use Slepian functions in some regularisation algorithms (see \cite{Fischer, DoreenDiss, Michel2015Handbook, MichelOrzlowski2017, MichelTelschow2016, RogerDiss}), which profit numerically from analytic expressions for the application of the forward operator to the used trial functions. We expect to be able to derive such expressions for basis systems of the considered types. To the knowledge of the authors, such vectorial Slepian functions on the ball are new.

The uncertainty principle forbids that a function can be limited in space and frequency at the same time. Thus, for Slepian functions in general, the boundedness in frequency is usually fixed. Then optimally localised functions can be considered. The general approach to the previously studied Slepian functions can be summed up as follows for the case of spatial localisation. In principle, we consider functions on a domain $D$ which shall be localised to a subdomain $R$. Then the quotient of the $\mathrm{L}^2 (R)$-norm and the   $\mathrm{L}^2 (D)$-norm is considered and is called the energy ratio. It is assumed that the Fourier expansion of the Slepian functions with respect to a chosen orthonormal basis has a finite number of terms (bandlimited expansion) -- for this reason, the use of a different (finite) basis essentially changes the obtained Slepian functions. By inserting this expansion into the energy ratio, the problem is equivalently formulated as a finite-dimensional algebraic eigenvalue problem of a so-called localisation matrix. The expansion coefficients of one Slepian function form the entries of one eigenvector of this matrix. The corresponding eigenvalue equals the energy ratio. An entry of this matrix is defined as the respective inner product of two basis functions. Therefore, the Gramian matrix of the eigenvalue problem can be calculated independently of the Slepian functions. Its solution yields a set of functions, which have a finite expansion in the chosen basis and a related localisation measure. Hence, their localisation in $R$ can be compared pairwise. Functions are better localised if their related eigenvalue has a higher value. Because of the principal axis theorem, the eigenvectors constitute a basis for the respective Euclidean vector space. Hence, the Slepian functions form an alternative basis in the respective function space due to Parseval's identity. This approach was used in most publications regarding Slepian functions. It is also convenient for our case of real-valued vector fields on the ball.  

This paper is structured as follows.
\cref{Prel} points out some common geomathematical notation. After that, three different vector bases on the ball are defined in \cref{Basis}. They all consist of Jacobi polynomials and vector spherical harmonics. In \cref{Sleps}, the bases are used to formulate the bandlimited orthogonal expansion of a Slepian vector field. In \cref{Sleps_1}, following the original approach towards Slepian functions, the optimisation problem is rearranged to a finite-dimensional algebraic eigenvalue problem. The entries of the Gramian matrix are treated analytically as far as possible. The analysis can be found in \cref{EntriesApp}. For practical purposes, some specifications mainly regarding the localisation region are made in \cref{Sleps_2}. The aim is to find vector fields that are well-localised in a part of a cone whose apex is situated in the origin. In \cref{Sleps_3}, the number of well-localised vector fields is predicted by a Shannon number. Finally, some Slepian functions are computed numerically and illustrated in \cref{Results}. It can be seen that the obtained functions are, indeed, strongly localised in the chosen region.

\section{Preliminaries} \label{Prel}
In the sequel, we sum up the definitions needed for the formulation of the Slepian localisation problem. In this paper, the common geomathematical notation will be used. It is introduced, for example, in \cite{FreedenGutting2013, FreedenSchreiner2009, Michel2013}. The terms 
\begin{align*}
\Omega \coloneqq \left\{ \xi \in \real^3 \colon \vert \xi \vert = 1 \right \}, \qquad 
\ball \coloneqq \left\{ x \in \real^3 \colon \vert x \vert \leq 1 \right\}
\end{align*} stand for the unit sphere and the unit ball, respectively. For a measurable subset $S \subseteq \real^3$, we define $\mathrm{l}^2(S) \coloneqq \mathrm{L}^2(S, \real^3)$. S can also be a surface in $\real^3$. A parameterisation of any point $x$ in $\real^3$ is given by
\begin{align}
x(r, \lon, t) = \Big( r \sqrt{1-t^2} \cos(\lon),\ r \sqrt{1-t^2} \sin(\lon),\ rt \Big)^\mathrm{T} \label{PolarParam}
\end{align} for $r \in \ [0, \infty[,\ \lon \in [0,2\pi[$ and $t \in [-1,1], \ t = \cos(\lat),\ \lat \in [0,\pi]$. Note that for any point $\xi \in \Omega$, the radial coordinate equals 1.

The surface gradient operator $\nabla^*$ represents the angular part (up to a factor $\tfrac{1}{r}$ for the length $r$ of a point) of the gradient operator $\nabla$. Furthermore, the surface gradient operator always yields a tangential field. The Beltrami operator $\belt$ is correspondingly the angular part of the Laplace operator $\Delta$ (up to a factor $\tfrac{1}{r^2}$ for the length $r$ of a point). Moreover, the surface curl $\mathrm{L}^*$ is defined via $(\mathrm{L}^* F) (\xi) \coloneqq \xi \times \nabla^* F(\xi)$. For further details, see, e.g., \cite[pp.~37-38]{FreedenSchreiner2009} or \cite[p.~87]{Michel2013} and \cref{EntriesApp} of this paper. Note that there exist versions of Green's theorems with these surface operators, see, for instance, \cite[pp.~40-41]{FreedenSchreiner2009}.

\section{Orthonormal Basis Systems on the Ball} \label{Basis}
Note that we consider here a modelling based on polar coordinates, because structures in a human brain (and inside the Earth) are, roughly speaking, layers with almost spherical boundaries. For this reason, we need corresponding basis systems, as they can be found in \cite{Tomo2, Tomo3, MichelOrzlowski2016, Tscherning1996} (see also the references therein). For orthogonal polynomials on $\ball$ with cartesian coordinates, see \cite{DunkelXu}.

To obtain a vectorial orthonormal basis on the ball $\ball$, we transform the scalar functions by means of the operators defined, for instance, in \cite[p.~218]{FreedenGutting2013}. Hence, we use combinations of diverse Jacobi polynomials and vector spherical harmonics. The latter are defined as follows for $\xi \in \Omega$:
\begin{align*}
y_{n,j}^{(1)} (\xi) \coloneqq \xi Y_{n,j}(\xi),\ 
y_{n,j}^{(2)} (\xi) \coloneqq \sqrt{\frac{1}{n(n+1)}} \nabla^*_\xi Y_{n,j}(\xi),\ 
y_{n,j}^{(3)} (\xi) \coloneqq \sqrt{\frac{1}{n(n+1)}} \mathrm{L}^*_\xi Y_{n,j}(\xi),
\end{align*}
where $Y_{n,j}(\xi)$ are scalar spherical harmonics, for instance fully normalised spherical harmonics as orthonormal polynomials on $\Omega$.
These are given as follows, see, for instance, \cite[p.~142]{FreedenGutting2013}: for $n \in \uint_0,\ j \in \sint,\ \vert j \vert \leq n$ as well as polar coordinates $\lon \in [0, 2\pi[$ and $ t \in [-1, 1]$ 
we define 
\begin{align}
\label{FNSH}
Y_{n,j}(\xi( \lon,t) ) &\coloneqq \sqrt{ \frac{(2n+1)}{2}\ \frac{(n-|j|)!}{(n+|j|)!} }\ P_{n,|j|}(t)\ \frac{1}{\sqrt{2\pi}}\ \left\{ \begin{matrix}
\sqrt{2}\cos \left(j\lon\right), &j < 0, \\ 
1, &j=0,\\ 
\sqrt{2}\sin \left(j\lon\right), &j > 0
\end{matrix} \right. \\
&\eqqcolon b_{n,j} P_{n,|j|}(t)\ \frac{1}{\sqrt{2\pi}}\ c_j(\varphi). \notag
\end{align}  
The functions $P_{n,|j|}$, $n \in \uint_0,\ j=-n,...,n,$ stand for associated Legendre functions.  
Note that $y_{n,j}^{(1)}$ is defined for all non-negative integers $n$, but $y_{n,j}^{(2)}$ and $y_{n,j}^{(3)}$ are only defined for positive integers $n$. In the following, this will be denoted by $$n \geq 0_i \textrm{ for } 0_i \coloneqq 1-\delta_{i1}$$ for the Kronecker Delta $\delta_{i1}$ and $i \in \{ 1,2,3\}.$ 
Then a basis of $\mathrm{l}^2(\ball)$ is given as follows.

\begin{defi}\label{gmnj}
Let $\beta > 0$ be the radius of a given ball. Further, fixed integers are given by $i=1,2,3,\ m \in \uint_0,\ n \in \uint_{0_i}$ and $j=-n,\dots,n$. The functions $P_m^{(\alpha, \beta)}$ stand for the Jacobi polynomials. The functions $y_{n,j}^{(i)}$ denote vector spherical harmonics. 
At last, let any point $x \in \ball$ be given by $ x  = r\xi$ with $r = |x| $ and $\xi \in \Omega$. 
The system $\mathrm{I}$ is defined for $x \in \ball$ by
\begin{align*}
g_{m,n,j}^{(\mathrm{I},i)}\ (r\xi) 
&\coloneqq F_{m,n}^\mathrm{I} (r)\ y_{n,j}^{(i)}(\xi) 
\coloneqq \sqrt{\frac{4m+2n+3}{\beta^3}} P_m^{(0,n+1/2)} \left( \frac{2r^2}{\beta^2} - 1 \right) \left( \frac{r}{\beta} \right)^n y_{n,j}^{(i)}(\xi). 
\intertext{The systems $\mathrm{II}$ and $\mathrm{III}$ are defined for $x \in \ball \backslash \{ 0 \}$. The system $\mathrm{II}$ is given by} 
g_{m,n,j}^{(\mathrm{II},i)}\ (r\xi) 
&\coloneqq F_{m,n}^\mathrm{II} (r)\ y_{n,j}^{(i)}(\xi) 
\coloneqq \sqrt{\frac{2m+3}{\beta^3}} P_m^{(0,2)}\left( \frac{2r}{\beta} - 1 \right) y_{n,j}^{(i)}(\xi)
\intertext{and the system $\mathrm{III}$ is defined by}
g_{m,n,j}^{(\mathrm{III},i)}\ (r\xi)  
&\coloneqq F_{m,n}^\mathrm{III} (r)\ y_{n,j}^{(i)}(\xi) 
\coloneqq \sqrt{\frac{4m+2n+1}{\beta^3}} P_m^{(0,n-1/2)}\left( \frac{2r^2}{\beta^2} - 1 \right) \left( \frac{r}{\beta} \right)^{n-1} y_{n,j}^{(i)}(\xi).
\end{align*}
\end{defi}
\enlargethispage{.5cm}
Note that only system I is well-defined in the origin. However, in the sense of $\mathrm{l}^2(\ball)$, this can be neglected. Further, the systems I and III are obviously very similar.
Wherever in this paper statements are made that hold true for every system I, II and III, the formulation $g_{m,n,j}^{(\star,i)}$ will be used. 

\begin{theo}
By construction, the functions of each system given in \cref{gmnj} are orthonormal and complete in $\mathrm{l}^2(\ball)$.
\end{theo}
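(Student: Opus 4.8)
The plan is to exploit the product structure $g_{m,n,j}^{(\star,i)}=F_{m,n}^{\star}\,y_{n,j}^{(i)}$ and to reduce the two assertions on $\mathrm{l}^{2}(\ball)$ to separate statements about the radial factors $F_{m,n}^{\star}$ and the vector spherical harmonics $y_{n,j}^{(i)}$. First I would pass to polar coordinates \cref{PolarParam}: for $f,g\in\mathrm{l}^{2}(\ball)$ one has $\langle f,g\rangle=\int_{0}^{\beta}\int_{\Omega}f(r\xi)\cdot g(r\xi)\,\mathrm{d}\omega(\xi)\,r^{2}\,\mathrm{d}r$ with the surface element $\mathrm{d}\omega$ on $\Omega$ (here $\ball$ is read as the ball of radius $\beta$, the unit ball being the case $\beta=1$ and a general $\beta$ a trivial rescaling). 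For two basis functions this integral factorises into a radial integral $\int_{0}^{\beta}F_{m,n}^{\star}(r)F_{m',n'}^{\star}(r)\,r^{2}\,\mathrm{d}r$ times an angular integral $\int_{\Omega}y_{n,j}^{(i)}\cdot y_{n',j'}^{(i')}\,\mathrm{d}\omega$.

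For the angular part I would simply cite that the system $\{y_{n,j}^{(i)}:i\in\{1,2,3\},\,n\geq 0_{i},\,|j|\leq n\}$ is an orthonormal basis of $\mathrm{l}^{2}(\Omega)$ (see \cite{FreedenSchreiner2009,FreedenGutting2013}); in particular the angular integral above equals $\delta_{ii'}\delta_{nn'}\delta_{jj'}$. The normalising factor $\sqrt{1/(n(n+1))}$ in $y_{n,j}^{(2)},y_{n,j}^{(3)}$ is exactly what is needed, since $\|\nabla^{*}Y_{n,j}\|_{\mathrm{L}^{2}(\Omega)}^{2}=n(n+1)$ by the Beltrami eigenvalue relation together with Green's formula, and similarly for $\mathrm{L}^{*}$.

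For the radial part, fix $n$ and, for system I, substitute $s=2r^{2}/\beta^{2}-1\in[-1,1]$. A direct computation converts $r^{2}\,\mathrm{d}r$ into a constant multiple of the Jacobi weight $(1-s)^{0}(1+s)^{n+1/2}\,\mathrm{d}s$, the extra power $(r/\beta)^{n}$ supplying precisely the missing factor $((1+s)/2)^{n}$, so that $F_{m,n}^{\mathrm{I}}$ becomes a constant multiple of $P_{m}^{(0,n+1/2)}(s)$. The substitutions $s=2r/\beta-1$ for system II (weight $(1+s)^{2}$, matching $P_{m}^{(0,2)}$) and $s=2r^{2}/\beta^{2}-1$ for system III (weight $(1+s)^{n-1/2}$, matching $P_{m}^{(0,n-1/2)}$) work in the same way. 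The classical orthogonality and completeness of $\{P_{m}^{(\alpha,\gamma)}\}_{m\in\uint_{0}}$ in $\mathrm{L}^{2}([-1,1],(1-s)^{\alpha}(1+s)^{\gamma}\,\mathrm{d}s)$ then show that $\{F_{m,n}^{\star}\}_{m\in\uint_{0}}$ is orthogonal and complete in $\mathrm{L}^{2}([0,\beta],r^{2}\,\mathrm{d}r)$; evaluating the standard normalisation integral $\int_{-1}^{1}(1-s)^{\alpha}(1+s)^{\gamma}\,P_{m}^{(\alpha,\gamma)}(s)^{2}\,\mathrm{d}s$ confirms that the prefactors in \cref{gmnj} are chosen so that $\|F_{m,n}^{\star}\|_{\mathrm{L}^{2}([0,\beta],r^{2}\mathrm{d}r)}=1$.

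Finally I would assemble the two ingredients. Orthonormality is immediate: if $(i,n,j)\neq(i',n',j')$ the angular factor vanishes, while if they coincide the angular factor is $1$ and the radial factor equals $\delta_{mm'}$. For completeness, take $f\in\mathrm{l}^{2}(\ball)$; for almost every $r$ one has $f(r\,\cdot)\in\mathrm{l}^{2}(\Omega)$, so Parseval on $\Omega$ together with Fubini gives $\|f\|^{2}=\sum_{i,n,j}\int_{0}^{\beta}|\langle f(r\,\cdot),y_{n,j}^{(i)}\rangle_{\mathrm{l}^{2}(\Omega)}|^{2}\,r^{2}\,\mathrm{d}r$; expanding each radial coefficient function in the orthonormal basis $\{F_{m,n}^{\star}\}_{m}$ and applying Parseval once more yields $\|f\|^{2}=\sum_{i,m,n,j}|\langle f,g_{m,n,j}^{(\star,i)}\rangle|^{2}$, i.e.\ completeness. (Equivalently one may invoke the Hilbert-space tensor product, writing $\mathrm{l}^{2}(\ball)$ as the orthogonal sum over $(i,n,j)$ of copies of $\mathrm{L}^{2}([0,\beta],r^{2}\,\mathrm{d}r)$ — the sum over $(i,n,j)$ rather than a single tensor factor being necessary because the radial basis depends on $n$.) I do not expect a genuine obstacle here: the phrase ``by construction'' is justified, and the only real work is the bookkeeping in the radial substitution that matches the volume element $r^{2}\,\mathrm{d}r$ with the correct Jacobi weight and pins down the normalising constants.
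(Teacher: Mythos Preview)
Your proposal is correct and follows the same strategy as the paper: split the $\mathrm{l}^{2}(\ball)$ inner product into a radial and an angular factor via polar coordinates, invoke the known orthonormality and completeness of the $y_{n,j}^{(i)}$ in $\mathrm{l}^{2}(\Omega)$, and handle the radial factor via the Jacobi-polynomial orthogonality after the substitutions $s=2r^{2}/\beta^{2}-1$ (systems I, III) or $s=2r/\beta-1$ (system II). The paper's proof simply cites \cite{Michel2013,MichelOrzlowski2016} for the radial part and does not spell out the Parseval/Fubini completeness step that you give; your write-up is thus a somewhat more self-contained version of the same argument.
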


\begin{proof} 
The inner product of $\mathrm{l}^2(\ball)$ of $g_{m,n,j}^{(\star,i)}$ and $g_{m',n',j'}^{(\star, i')}$ yields
\begin{align}
\label{tmp100}
\left\langle g_{m,n,j}^{(\star, i)}, g_{m',n',j'}^{(\star,i')} \right\rangle_{\mathrm{l}^2(\ball)} 
&= \int_{\ball}^{} g_{m,n,j}^{(\star,i)}(x) \cdot g_{m',n',j'}^{(\star,i')}(x)\ \mathrm{d}x \notag\\
&= \int_{0}^{\beta}\int_{\Omega}^{} g_{m,n,j}^{(\star,i)}(r\xi) \cdot g_{m',n',j'}^{(\star,i)}(r\xi)\ r^2\ \mathrm{d}\omega(\xi)\ \mathrm{d}r \notag \\
&= \int_{0}^{\beta}F_{m,n}^\star(r) F_{m',n'}^\star (r) r^2\ \mathrm{d}r \int_{\Omega}^{} y_{n,j}^{(i)}(\xi) \cdot y_{n',j'}^{(i')}(\xi)\ \mathrm{d}\omega(\xi).
\end{align}
The orthonormality and completeness of the vector spherical harmonics on $\mathrm{l}^2(\Omega)$ are well known, see for instance \cite[pp.~219-220]{FreedenGutting2013}. The integral over  $[0, \beta]$ is discussed, for example, in \cite[pp.~249--252]{Michel2013} for systems I and II, and in \cite[Th.~3.1]{MichelOrzlowski2016} for system III. Both references include the orthonormality and completeness of the radial systems $F_{m,n}^\star,\ \star \in \{ \mathrm{I}, \mathrm{II}, \mathrm{III} \}$ on the corresponding weighted $\mathrm{L}^2$-space on $[0, \beta]$. Hence, the combination of these systems forms an orthonormal basis in the space $\mathrm{l}^2(\ball)$. 
\end{proof}

The systems from \cref{gmnj} serve in a Fourier expansion of a function $f \in \mathrm{l}^2(\ball)$:
\begin{align*}
f = \sum_{i=1}^{3} \sum_{m=0}^{\infty} \sum_{n=0_i}^{\infty} \sum_{j=-n}^{n} f^\star_{i,m,n,j} g_{m,n,j}^{(\star,i)}, \quad
f^\star_{i,m,n,j} \coloneqq \int_{\ball}^{}f(x)\cdot g_{m,n,j}^{(\star,i)}(x)\ \mathrm{d}x.
\end{align*} Note that this equality includes that the series above converges in the sense of $\mathrm{l}^2(\ball)$.

In this paper, summations of Fourier expansions of vectorial functions will be written as
\begin{align*}
\sum_{i,m,n,j}^{} := \sum_{i=1}^{3} \sum_{m=0}^{\infty} \sum_{n=0_i}^{\infty} \sum_{j=-n}^{n}, \quad
\sum_{i,m,n,j}^{M,N} := \sum_{i=1}^{3} \sum_{m=0}^{M} \sum_{n=0_i}^{N} \sum_{j=-n}^{n}. 
\end{align*}
Analogous versions are used if one or more summations are missing. Furthermore, the vector of all Fourier coefficients is defined as $$\hat{f}^\star := (f^\star_{i,m,n,j})_{i=1,2,3,m \geq 0,n \geq 0_i, j=-n,...,n}$$ for each system I, II and III, where $\star$ again represents this choice.

\section{Localisation of bandlimited vector fields} \label{Sleps}

Now we can formulate the localisation problem. For a certain subregion, we give the entries of the Gramian matrix. At last, we consider the number of well-localised functions for this subregion. From now on, every vector field $f$ is bandlimited. This means we have
\begin{align*} 
f = \sum_{i,m,n,j}^{M,N} f^\star_{i,m,n,j}\ g_{m,n,j}^{(\star,i)}
\end{align*}
for $M \in \uint_0, N \in \uint_{0_i}$ and $\star \in \{ \mathrm{I}, \mathrm{II}, \mathrm{III}\}$.

\subsection{Mathematical formulation with respect to arbitrary localisation regions} \label{Sleps_1}
The Slepian functions shall define a basis of a finite-dimensional subspace of $\mathrm{l}^2(\ball)$. However, an everywhere vanishing vector field cannot be a basis function. Therefore, we assume that $f \not \equiv 0$.

Further, the Slepian functions shall be localised in a measurable subset $R \subseteq \ball$. Therefore, a measure for the localisation of a vector field has to be defined, which will be done here in accordance with the known concept of Slepian functions for the other cases.

\begin{defi}
For a square-integrable vector field $f \colon \ball \to \real^3$, a localisation parameter is formulated by the energy ratio
\begin{align*}
\lambda \coloneqq \frac{ \| f \|^2_{\mathrm{l}^2(R)}}{\| f \|^2_{\mathrm{l}^2(\ball)}} = \frac{  \displaystyle\int_{R} f(x) \cdot f(x)\ \mathrm{d}x}{  \displaystyle\int_{\ball} f(x) \cdot f(x)\ \mathrm{d}x} \ .
\end{align*}
\end{defi}
It clearly holds true that $\lambda \in [0,1]$. The energy ratio and the expansion yield an eigenvalue problem as follows: 
\begin{align*}
\lambda
= \frac{ \sum\limits_{i,m,n,j}^{M,N} \sum\limits_{i',m',n',j'}^{M,N} f^\star_{i,m,n,j} \left[  \displaystyle\int_{R}^{} g_{m,n,j}^{(\star,i)} (x) \cdot g_{m',n',j'}^{(\star,i')} (x)\ \mathrm{d} x \right] f^\star_{i',m',n',j'} }{\sum\limits_{i,m,n,j}^{M,N} \left( f^\star_{i,m,n,j} \right)^2} 
= \frac{\left( \hat{f}^\star\right)^\mathrm{T} K^\star  \hat{f}^\star}{\left( \hat{f}^\star\right)^\mathrm{T}  \hat{f}^\star},
\end{align*} 
where we used the Parseval identity in the denominator. The localisation matrix $K^\star$ has the form
\begin{align} \label{LocMat}
K^\star &\coloneqq \begin{pmatrix}
P^\star & 0 & 0 \\
0 & B^\star & D^\star \\ 
0 & (D^\star)^\mathrm{T} & C^\star 
\end{pmatrix}
\eqqcolon \begin{pmatrix}
P^\star & 0 \\
0 & Q^\star \\ 
\end{pmatrix},  
\quad 
K^\star_{p, p'} \coloneqq \int_{R}^{} g_{m,n,j}^{(\star,i)} (x) \cdot g_{m',n',j'}^{(\star,i')} (x)\ \mathrm{d}x 
\end{align} 
for $p = (i,m,n,j)$ and $p' = (i',m',n',j')$.
The submatrix $P^\star$ belongs to the case $i=i'=1$, $B^\star$ to $i=i'=2$, $C^\star$ to $i=i'=3$ and $D^\star$ to $i=2$ and $i'=3$. The submatrix $Q^\star$ combines all four block matrices originating from the tangential problem. The cases $i=1$ with $i' \in \left\{ 2,3 \right\}$ and vice versa vanish as the basis functions $g_{m,n,j}^{(\star,1)}$ are pointwise orthonormal to $g_{m,n,j}^{(\star,2)}$ and $g_{m,n,j}^{(\star,3)}$ in the Euclidean sense. This holds true because $g_{m,n,j}^{(\star,1)}$ is a normal field to concentric spheres around 0 and $g_{m,n,j}^{(\star,i)},\ i \in \{2,3\}$ are tangential fields by construction.  
The matrix $K^\star$ is symmetric due to the symmetry of the Euclidean inner product. Thus, the case $i=3$ and $i'=2$ yields the transpose $(D^\star)^\mathrm{T}$ of the case $i=2$ and $i'=3$ as a submatrix.
The number of rows and columns, respectively, of $K^\star$ is $$Z \coloneqq (M+1)(3(N+1)^2-2).$$ The energy ratio $\lambda$ is an eigenvalue of $K^\star$. The Principal Axis Theorem yields that $K^\star$ can be diagonalised. Moreover, $K^\star$ has only real eigenvalues $\lambda_k^\star$ for positive integers $k \leq  Z$. The related eigenvectors $\widehat{f_k}^\star,\ k \leq Z,$ are also real and form an orthonormal basis of $\real^{Z}.$ Furthermore, they contain the Fourier coefficients of the vector field $f$. The associated  eigenfunctions $f_k^\star$ with index  $k = 1, \dots, Z$ are defined as 
\begin{align*}
f_k^\star \coloneqq \sum\limits_{i,m,n,j}^{M,N} \left( \widehat{f_k} \right)^\star_{i,m,n,j}\ g_{m,n,j}^{(\star,i)}.
\end{align*}
They are called vectorial Slepian functions on the ball.

\begin{rema} \label{WellLocalised}
A few properties of vectorial Slepian functions on the ball shall be remarked at this point. 
\begin{enumerate}
\item Their definition shows that a vectorial Slepian function on the ball is a bandlimited vector field, whose Fourier coefficients constitute an eigenvector of the localisation matrix $K^\star$. The corresponding eigenvalue equals the energy ratio of the vectorial Slepian function.
\item The best-localised vectorial Slepian function on the ball solves the optimisation problem $\lambda \longrightarrow \max$. 
\item For a set of vectorial Slepian functions, the maximal eigenvalue can be determined. The vectorial Slepian functions on the ball with respect to an eigenvalue close to this maximal eigenvalue are called well-localised. 
\item For a fixed $\star \in \{\mathrm{I, II, III} \}$, the system of vectorial Slepian functions on the ball $\{ f_k^{\star}\ \colon \ k = 1, \dots, Z\} $ forms an orthonormal system in $\mathrm{l}^2(\ball)$ as well as an orthogonal system in $\mathrm{l}^2(R)$ due to Parseval's identity and the eigenvector property.
\end{enumerate}
\end{rema}

For practical purposes, the predominant question is what the matrix entries of $K^\star$ look like in detail. If these are calculated, the eigenvalues and -vectors of $K^\star$ can be deter\-mined with the use of well-known methods from numerical linear algebra. Thereby, more or less -- depending on the value of the related eigenvalue -- spatially localised vectorial Slepian functions on the ball are obtained. 

\subsection{Specifications for numerical experiments} \label{Sleps_2}

The entries of $K^\star$ cannot be more specified in the general setting. Both the localisation region $R$ and the vector spherical harmonics within the basis functions $g_{m,n,j}^{(\star,i)}$ have to be fixed. 
The basis functions are constructed with the use of fully normalised spherical harmonics for practical purposes. With these, vector spherical harmonics and, hence, basis functions $g_{m,n,j}^{(\star,i)}$  are constituted.

The localisation regions in the brain that are of interest to the imaging of neural currents are defined as follows.

\begin{defi} 
Let $\beta \in \ ]0,\infty[$ be the radius of the ball. Further, let the parameters  $a \in [0,\beta[,\ b \in \ ]a, \beta]$ and $\Theta \in \ ]0, \pi]$ be fixed.
For the construction of vectorial Slepian functions on the ball, the localisation region $R$ is called the original partial cone and is defined as
\begin{align} \label{localRegion} 
R \coloneqq \left\{ x(r, \lon, t) \in \real^3 \colon a \leq r \leq b,\ 0 \leq \lon < 2\pi,\ \cos ( \Theta ) \leq t \leq 1 \right\},
\end{align} where $r, \lon$ and $t$ are spherical coordinates.
\end{defi}

\begin{wrapfigure}[14]{l}{5cm}
\subfigure{\includegraphics[width=0.3\textwidth]{./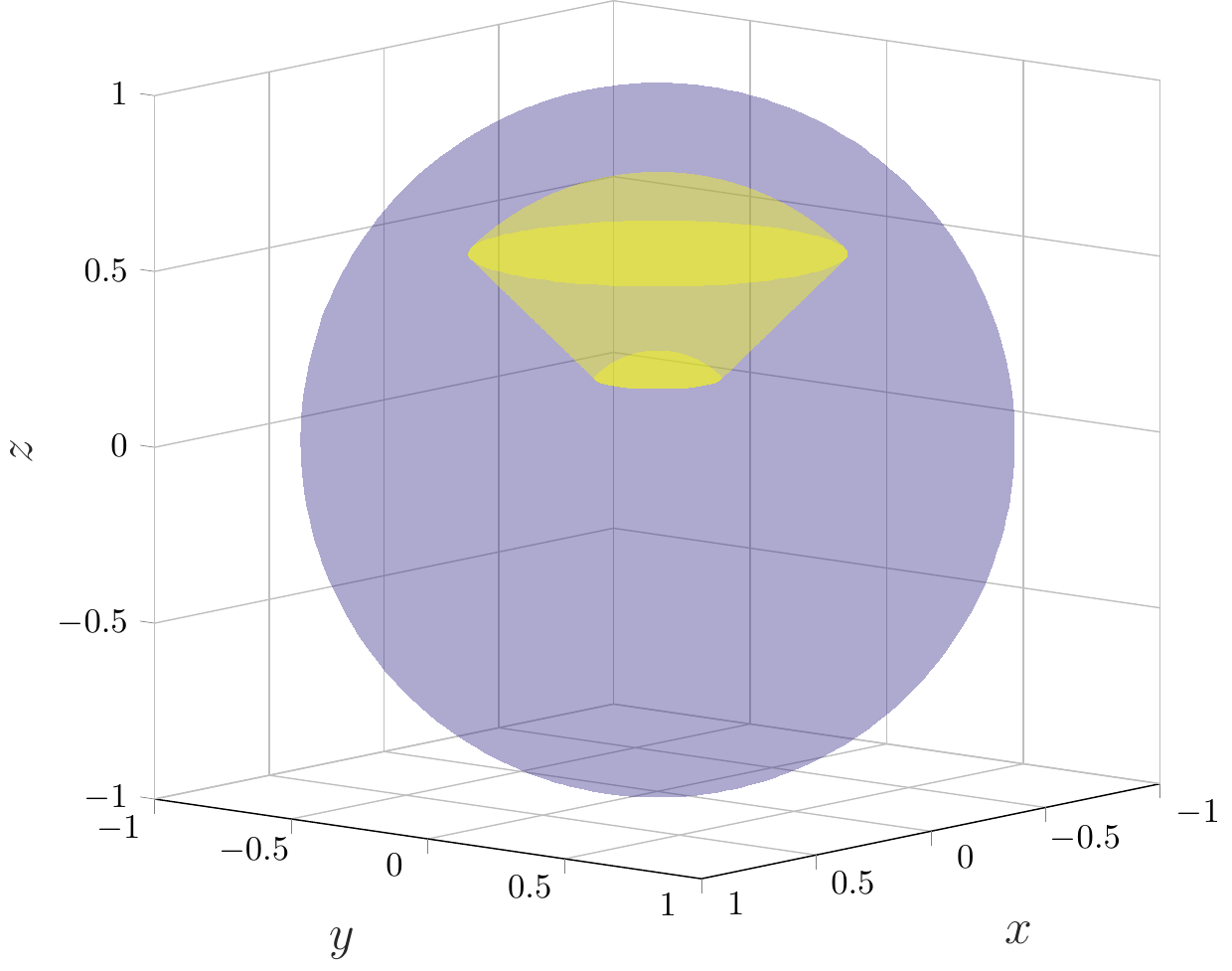}}
\caption[Example of an original partial cone.]{Example of an original partial cone. The boundaries of the cone are marked in yellow. The cone  is defined by $a=0.25,\ b=0.75$ and $\Theta = 45^\circ = \frac{\pi}{4}$. }
\label{OPC}
\end{wrapfigure}

This region resembles a part of a cone. Its height is directed at $\left( 0,0,1 \right)^\mathrm{T}$. Clearly, for a fixed radius $r$, the intersection of $R$ and the sphere with radius $r$ and centre $0$ is a so-called spherical cap. An example of a region of this type is given in \cref{OPC}. With the use of Wigner rotation matrices, as described, for example, in \cite[App.~C.8]{Tromp1998}, the Slepian vector fields can be rearranged. This allows the concentration of functions to partial cones of the type $R$, where, however, the symmetry axis $(t=1)$ is arbitrarily rotated. All cones as defined above have in common that their apex is situated in the origin. 

In combination with the fully normalised spherical harmonics, these subsets of the ball prove to be very convenient. Regarding the entries of $K^\star$, properties like the fact that the spherical harmonics are the eigenfunctions of the Beltrami operator, the periodicity of sine and cosine and Green's surface identities simplify the volume integrals. At this point, the results are assembled. Prior to this overview of matrix entries, certain recurring factors are abbreviated.

\enlargethispage{.5cm}
\begin{defi} \label{MatrixFactors}
A shorthand notation of normalisation factors of Jacobi polynomials and the radial integrals with respect to the systems $\mathrm{I}, \mathrm{II}$ and $\mathrm{III}$ is introduced as follows: 
\begin{align*}
a_{m,m',n,n'}^{\mathrm{I}} &\coloneqq \sqrt{\frac{(4m+2n+3)(4m'+2n'+3)}{2^{n+n'+5}}}\ , \qquad 
a_{m,m',n,n'}^{\mathrm{II}} \coloneqq \sqrt{\frac{(2m+3)(2m'+3)}{64}}\ , \\ 
a_{m,m',n,n'}^{\mathrm{III}} &\coloneqq \sqrt{\frac{(4m+2n+1)(4m'+2n'+1)}{2^{n+n'+3}}}\ ,
\end{align*}
\begin{align*}
I_{m,m',n,n'}^{\mathrm{I}} &\coloneqq \int_{\frac{2a^2}{\beta^2} -1}^{\frac{2b^2}{\beta^2} -1} P_m^{(0,n+1/2)} (u)\ P_{m'}^{(0,n'+1/2)} (u) (u+1)^{(n+n'+1)/2}\ \mathrm{d}u, \\
I_{m,m',n,n'}^{\mathrm{II}} &\coloneqq \int_{\frac{2a}{\beta} -1}^{\frac{2b}{\beta} -1} P_m^{(0,2)} (u)\ P_{m'}^{(0,2)} (u) (u+1)^2\ \mathrm{d}u, \\
I_{m,m',n,n'}^{\mathrm{III}} &\coloneqq \int_{\frac{2a^2}{\beta^2} -1}^{\frac{2b^2}{\beta^2} -1} P_m^{(0,n-1/2)} (u)\ P_{m'}^{(0,n'-1/2)} (u) (u+1)^{(n+n'-1)/2}\ \mathrm{d}u. 
\end{align*}
\end{defi}

This notation provides the matrix entries as follows. The necessary computations can be found in detail in \cref{EntriesApp}.

\begin{theo} \label{MatrixEntries}
Let fully normalised spherical harmonics be used to construct vectorial basis functions of system $\mathrm{I}, \mathrm{II}$ or $\mathrm{III}$. Further, the vectorial Slepian functions on the ball shall be localised in the original partial cone. The localisation matrix of the related eigenvalue problem is denoted by $K^\star$. Then the use of the abbreviations from \cref{MatrixFactors} gives the entries of $K^\star$ with $\star \in \left\{ \mathrm{I}, \mathrm{II}, \mathrm{III} \right\}$ as:
\begin{align*} 
\MoveEqLeft[3] K^\star_{(1,m,n,j),(1,m',n',j')} = a_{m,m',n,n'}^{\star} I_{m,m',n,n'}^{\star} b_{n,j}b_{n',j} \delta_{jj'} \int_{\cos \left(\Theta\right)}^{1} P_{n,|j|}(t) P_{n',|j|}(t)\ \mathrm{d}t, \\
\MoveEqLeft[3] K^\star_{(2,m,n,j),(2,m',n',j')} = K^\star_{(3,m,n,j),(3,m',n',j')} \\
= {} & a_{m,m',n,n'}^{\star} I_{m,m',n,n'}^{\star} b_{n,j}b_{n',j} \delta_{jj'} \left( \sqrt{\frac{n' (n'+1)}{n (n+1)}}\ \int_{\cos \left(\Theta\right)}^{1} P_{n,|j|}(t) P_{n',|j|}(t)\ \mathrm{d}t \right. \notag \\
& \qquad - \left. \frac{\sin^2 \left(\Theta\right)}{\sqrt{n (n+1) n' (n'+1)}}\  P_{n,|j|}(\cos \left(\Theta\right))  P'_{n',|j|}(\cos \left(\Theta\right))\ \vphantom{\sqrt{\frac{n'\ (n'+1)}{n\ (n+1)}}\ \int_{\cos \left(\Theta\right)}^{1}} \right), \\
\MoveEqLeft[3] K^\star_{(2,m,n,j),(3,m',n',j')} \\
= {} & a_{m,m',n,n'}^{\star} I_{m,m',n,n'}^{\star} b_{n,j}b_{n',-j} \frac{j\delta_{-j,j'}}{\sqrt{n (n+1) n' (n'+1)}}\ P_{n,|j|}(\cos \left(\Theta \right))  P_{n',|j'|}(\cos \left(\Theta\right)).
\end{align*}
\end{theo}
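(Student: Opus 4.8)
The plan is to exploit the product structure of everything involved. Each basis function factors as $g_{m,n,j}^{(\star,i)}(r\xi)=F_{m,n}^\star(r)\,y_{n,j}^{(i)}(\xi)$, and the original partial cone consists exactly of the points $r\xi$ with $r\in[a,b]$ and $\xi$ in the spherical cap $\Gamma\coloneqq\{\xi(\lon,t)\in\Omega:0\le\lon<2\pi,\ \cos\Theta\le t\le 1\}$. Hence, just as in the orthonormality proof (cf.\ \cref{tmp100}), every entry of $K^\star$ splits into a radial and an angular factor,
\begin{align*}
K^\star_{(i,m,n,j),(i',m',n',j')}=\left(\int_a^b F_{m,n}^\star(r)\,F_{m',n'}^\star(r)\,r^2\,\mathrm{d}r\right)\left(\int_\Gamma y_{n,j}^{(i)}(\xi)\cdot y_{n',j'}^{(i')}(\xi)\,\mathrm{d}\omega(\xi)\right),
\end{align*}
and I would treat the two factors separately. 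The blocks with $i=1$, $i'\in\{2,3\}$ (and vice versa) vanish pointwise, since $y_{n,j}^{(1)}$ is normal and $y_{n',j'}^{(i')}$ is tangential to the concentric spheres, as already noted after \cref{LocMat}; so only the four stated blocks remain.

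For the radial factor I would substitute $u=2r^2/\beta^2-1$ for systems $\mathrm{I}$ and $\mathrm{III}$ and $u=2r/\beta-1$ for system $\mathrm{II}$. For each system, the factor $r^2\,\mathrm{d}r$, the radial power occurring in the definition of $g_{m,n,j}^{(\star,i)}$ and the substitution Jacobian combine into a constant multiple of $(u+1)^{(n+n'+1)/2}$, $(u+1)^{(n+n'-1)/2}$, resp.\ $(u+1)^2$ for $\mathrm{I}$, $\mathrm{III}$, resp.\ $\mathrm{II}$; collecting the powers of $\beta$ and of $2$ into the constant produces exactly $a_{m,m',n,n'}^\star$, while the surviving integral is $I_{m,m',n,n'}^\star$ of \cref{MatrixFactors}. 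This is a routine computation.

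For the angular factor I would distinguish cases. For $i=i'=1$ one has $y_{n,j}^{(1)}\cdot y_{n',j'}^{(1)}=Y_{n,j}Y_{n',j'}$ because $|\xi|=1$; inserting the separated form \cref{FNSH}, the $\lon$-integral collapses by the orthogonality relation $\tfrac{1}{2\pi}\int_0^{2\pi}c_j(\lon)c_{j'}(\lon)\,\mathrm{d}\lon=\delta_{jj'}$, which leaves $b_{n,j}b_{n',j}\,\delta_{jj'}\int_{\cos\Theta}^1 P_{n,|j|}(t)P_{n',|j|}(t)\,\mathrm{d}t$. For $i=i'=3$, note that $\mathrm{L}^*F=\xi\times\nabla^*F$ is the pointwise $90^\circ$-rotation of the tangential vector $\nabla^*F$ inside the tangent plane, so $\mathrm{L}^*Y_{n,j}\cdot\mathrm{L}^*Y_{n',j'}=\nabla^*Y_{n,j}\cdot\nabla^*Y_{n',j'}$ everywhere; hence the $(3,3)$-block equals the $(2,2)$-block and it suffices to treat $i=i'=2$. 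For $i=i'=2$ I would apply the first Green surface identity on $\Gamma$,
\begin{align*}
\int_\Gamma\nabla^*Y_{n,j}\cdot\nabla^*Y_{n',j'}\,\mathrm{d}\omega=-\int_\Gamma Y_{n,j}\,\belt Y_{n',j'}\,\mathrm{d}\omega+\oint_{\partial\Gamma}Y_{n,j}\,\pdervnu Y_{n',j'}\,\mathrm{d}s,
\end{align*}
use $\belt Y_{n',j'}=-n'(n'+1)Y_{n',j'}$ on the volume term (which reproduces the $i=i'=1$ computation, scaled by $n'(n'+1)$), and evaluate the boundary integral along the circle $t=\cos\Theta$: there $\pdervnu=-\sqrt{1-t^2}\,\pdervt$ and $\mathrm{d}s=\sqrt{1-t^2}\,\mathrm{d}\lon=\sin\Theta\,\mathrm{d}\lon$, so the $\lon$-integral again collapses via $\tfrac{1}{2\pi}\int_0^{2\pi}c_jc_{j'}\,\mathrm{d}\lon=\delta_{jj'}$; dividing by $\sqrt{n(n+1)n'(n'+1)}$ yields the stated formula, in particular the factor $\sin^2\Theta$. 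For $i=2$, $i'=3$ I would use that the surface curl is divergence-free, $\mathrm{div}^*\mathrm{L}^*\equiv 0$, whence $\nabla^*Y_{n,j}\cdot\mathrm{L}^*Y_{n',j'}=\mathrm{div}^*\!\big(Y_{n,j}\,\mathrm{L}^*Y_{n',j'}\big)$; the surface divergence theorem then rewrites the integral as $\oint_{\partial\Gamma}\big(Y_{n,j}\,\mathrm{L}^*Y_{n',j'}\big)\cdot\nu\,\mathrm{d}s$, on the circle $t=\cos\Theta$ one has $\mathrm{L}^*Y_{n',j'}\cdot\nu=-\tfrac{1}{\sin\Theta}\,\frac{\partial Y_{n',j'}}{\partial\lon}$, and using $c_j'=j\,c_{-j}$ together with $\tfrac{1}{2\pi}\int_0^{2\pi}c_j c_{-j'}\,\mathrm{d}\lon=\delta_{j,-j'}$ produces the cross term with its factor $j$.

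The main obstacle will be the bookkeeping in these two boundary integrals on the cap: correctly identifying the outward co-normal on the circle $t=\cos\Theta$, keeping track of signs, and generating the $\sin\Theta$ (hence $\sin^2\Theta$) jointly from the arclength element and the co-normal derivative, while simultaneously carrying along the normalisation factors $b_{n,j}$, the square roots $\sqrt{n(n+1)}$ and the Kronecker deltas. All remaining steps are either a direct substitution or a textbook application of a surface Green or divergence theorem as in \cite[pp.~40-41]{FreedenSchreiner2009}.
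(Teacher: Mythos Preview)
Your proposal is correct and follows essentially the same route as the paper: the same radial substitutions reduce the radial factor to $a_{m,m',n,n'}^\star I_{m,m',n,n'}^\star$, Case~1 is handled identically, Case~2 via Green's first surface identity plus the Beltrami eigenvalue relation and the boundary evaluation on $t=\cos\Theta$, and Case~4 via $\nabla^*\cdot\mathrm{L}^*=0$ together with the surface divergence theorem. The only (cosmetic) difference is Case~3: the paper simply cites \cite[eq.~(46)]{Simons2014} for $K^\star_{(3,\cdot),(3,\cdot)}=K^\star_{(2,\cdot),(2,\cdot)}$, whereas your pointwise argument that $\xi\times\cdot$ is a $90^\circ$ rotation of the tangent plane, hence $\mathrm{L}^*Y_{n,j}\cdot\mathrm{L}^*Y_{n',j'}=\nabla^*Y_{n,j}\cdot\nabla^*Y_{n',j'}$, is a self-contained and slightly more elementary justification of the same fact.
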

The proof of these identities can be found in \cref{EntriesApp}. Note that the submatrices $B^\star$ and $C^\star$ coincide in this setting. 

\subsection{The number of well-localised vector fields} \label{Sleps_3}
The solution of the eigenvalue problem yields as many as $Z$ vectorial Slepian functions on the ball. As mentioned before in \cref{WellLocalised}, these functions can be subdivided into well-localised and poorly-localised ones. Hence, an eigenvalue related to a well-localised function is called significant. Otherwise, it is called insignificant. This `division' helps us to consider the number of well-localised Slepian functions which is called the Shannon number $S^\star$.

The idea of the familiar approach is as follows: the Shannon number can be estimated by the summation of all eigenvalues. On the one hand, significant eigenvalues are values closer to one than to zero. Assume, these values are precisely one. On the other hand, insignificant eigenvalues attain values that are not close to one. In analogy, the assumption is made that the values are strictly zero. Then the summation of all eigenvalues coincides with the number of significant eigenvalues. Hence, it also concurs with the number of well-localised vectorial Slepian functions on the ball. This idea was utilised in the previous works on Slepian functions mentioned above. 

Due to basic linear algebra, see for instance \cite[pp.~229-230]{Fischer2010}, it holds true that similar matrices have the same trace. Hence, the Shannon number can be computed by
\begin{align*}
S^\star = \sum_{k=1}^{Z} \lambda^\star_k = \sum\limits_{i,m,n,j}^{M,N}\ \int_{R}^{} g_{m,n,j}^{(\star,i)}(x) \cdot g_{m,n,j}^{(\star,i)}(x)\ \mathrm{d}x.
\end{align*}
In this paper, the original partial cone is chosen as the localisation region $R$ for practical purposes. This region is a particular case of a general type of subsets of the ball: regions $R$ with independent radial and angular part. For such regions, the volume integrals of the matrix entries can be separated into an integral of the product of two Jacobi polynomials and the integral of the product of two vector spherical harmonics. The first one can be dealt with as in \cref{EntriesApp}. Regarding the latter one, let $ \mathcal{C} $ denote the angular part of this separation. For the summation of the integral over $\mathcal{C}$, we use a derivation from the vectorial addition theorem as seen in \cite[p.~244]{FreedenSchreiner2009}. Thus, if system II is chosen, we obtain
\begin{align*} 
\sum\limits_{i,n,j}^{N}\  \int_{\mathcal{C}}^{} y_{n,j}^{(i)}(\xi) \cdot y_{n,j}^{(i)}(\xi)\  \mathrm{d}\omega(\xi) 
= \frac{3(N+1)^2-2}{4\pi}\ \mathrm{A} (\mathcal{C}), 
\end{align*}
where $\mathrm{A} ( \mathcal{C} )$ indicates the surface area of $\mathcal{C}$. In the case of the original partial cone, this is a spherical cap which can be modelled as a rotation surface. Thus, we obtain 
\begin{align*} 
\mathrm{A} ( \mathcal{C} ) = 2\pi \beta^2(1-\cos \left(\Theta\right))
\end{align*}
for the radius $\beta$ of the ball and the angle $\Theta$ of the spherical cap. If system I or III is chosen, the surface integral can be simplified to
\begin{align*} 
\sum\limits_{i,j}\  \int_{\mathcal{C}}^{} y_{n,j}^{(i)}(\xi) \cdot y_{n,j}^{(i)}(\xi)\  \mathrm{d}\omega(\xi)
= \frac{3(2n+1)}{4\pi}\ \mathrm{A} ( \mathcal{C} ) \qquad \forall \ 0_i \leq n \leq N, 
\end{align*}
with the surface area $\mathrm{A} ( \mathcal{C} )$. With the derivations of the integral of Jacobi polynomials as in \cref{EntriesApp} together with the considerations about the integral of vector spherical harmonics above, the Shannon number rearranges to
\begin{align*} 
S^\mathrm{I} 
&= \beta^2(1-\cos \left(\Theta\right)) \sum\limits_{m,n}^{M,N} \left( \vphantom{\int_{\frac{2a^2}{\beta^2}-1}^{\frac{2b^2}{\beta^2}-1}} \frac{3  (2n+1) (4m+2n+3)}{2^{n+7/2}} \right. \notag \\
& \qquad \qquad \left. \times\int_{\frac{2a^2}{\beta^2}-1}^{\frac{2b^2}{\beta^2}-1} P_m^{(0,n+1/2)} (u)  P_m^{(0,n+1/2)} (u) (u+1)^{n+1/2}\ \mathrm{d}u  \right)
\end{align*} for system I. If system II is selected, the  number is given by
\begin{align*} 
S^\mathrm{II} 
&= \frac{3(N+1)^2-2}{16}\ \beta^2(1-\cos \left(\Theta\right))  \sum\limits_{m}^{M}  (2m+3) \int_{\frac{2a}{\beta}-1}^{\frac{2b}{\beta}-1}   P_m^{(0,2)} (u) P_m^{(0,2)} (u) (u+1)^2\ \mathrm{d}u.  
\end{align*} 
And, in the case of system III,
\begin{align*}
S^\mathrm{III} &=  \beta^2(1-\cos \left(\Theta\right))\sum\limits_{m,n}^{M,N} \left( \vphantom{\int_{\frac{2a^2}{\beta^2}-1}^{\frac{2b^2}{\beta^2}-1}} \frac{3 (2n+1) (4m+2n+1)}{2^{n+5/2}} \right. \notag \\ 
& \qquad \times \left. \int_{\frac{2a^2}{\beta^2}-1}^{\frac{2b^2}{\beta^2}-1} P_m^{(0,n-1/2)} (u)  P_m^{(0,n-1/2)} (u) (u+1)^{n-1/2}\ \mathrm{d}u \right).  
\end{align*} 

Each formula points out a particular property of the vectorial Slepian functions on the ball: the number of well-localised functions depends on the highest possible radial and angular degree as well as on the size of the localisation region. This is in analogy to the previous works on other Slepian functions. Both dependencies can be explained. On the one hand, the more functions are used for the Fourier expansion of a Slepian function, the smaller the differences between two Slepian functions can be. Hence, if the size of $K^\star$ increases, the number of significant eigenfunctions increases as well. On the other hand, the larger the localisation region is, the less the spherical harmonics and Jacobi polynomials have to be suppressed. Spherical harmonics are also polynomials. Hence, both functions are not well-localised. Thus, if the localisation region decreases, it is harder to find well-localised eigenfunctions.\\

\section{Numerical results} \label{Results}

\begin{figure}[!t] 
\centering
\subfigure{\includegraphics[width=0.45\textwidth]{./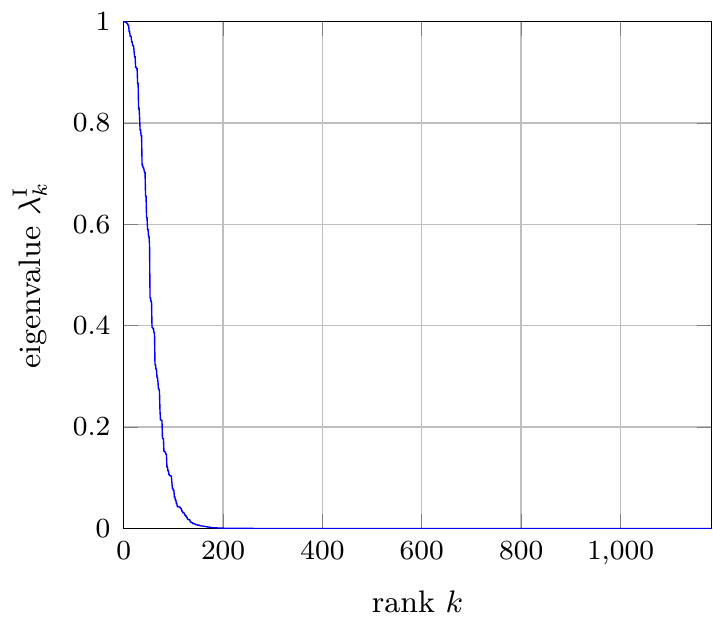}}
\hspace{1cm}
\subfigure{\includegraphics[width=0.45\textwidth]{./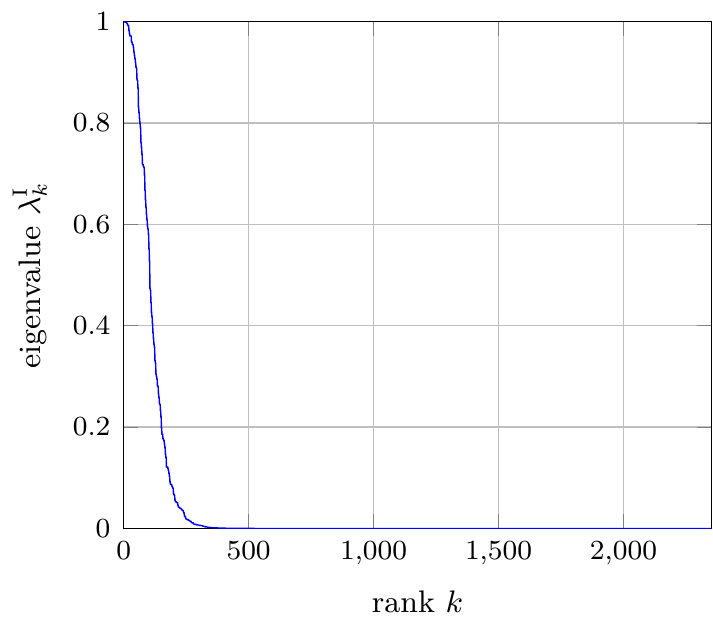}}
\caption{Distribution of the eigenvalues of $P^\mathrm{I}$ (left) and $Q^\mathrm{I}$ (right).} 
\label{EtaSysI}

\subfigure{\includegraphics[width=0.24\textwidth]{./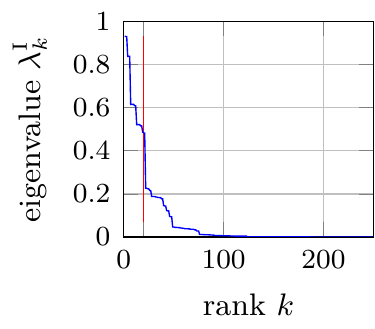}}
\subfigure{\includegraphics[width=0.24\textwidth]{./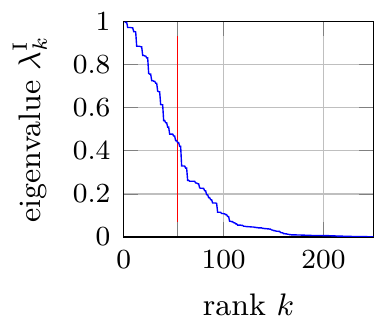}}
\subfigure{\includegraphics[width=0.24\textwidth]{./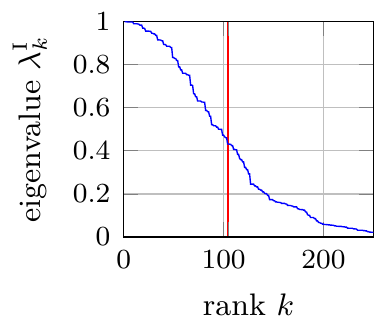}}
\subfigure{\includegraphics[width=0.24\textwidth]{./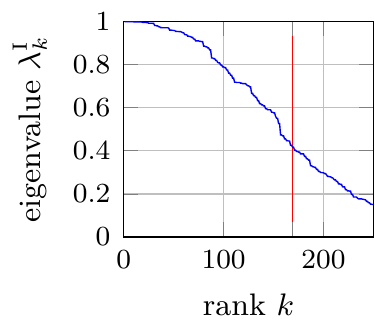}}
\caption{The distribution of the eigenvalues (blue) of $K^\mathrm{I}$  in the cases $\Theta=15^\circ,\ 25^\circ,\ 35^\circ,\ 45^\circ$ (left to right) is shown. With red, the Shannon number is marked. It is given by $S^\mathrm{I} \approx 20,\ 54,\ 104,\ 169$ (left to right).}
\label{EtaSysIShannon}
\end{figure}

\begin{figure}[!t] 
\centering
\subfigure{\includegraphics[width=0.35\textwidth]{./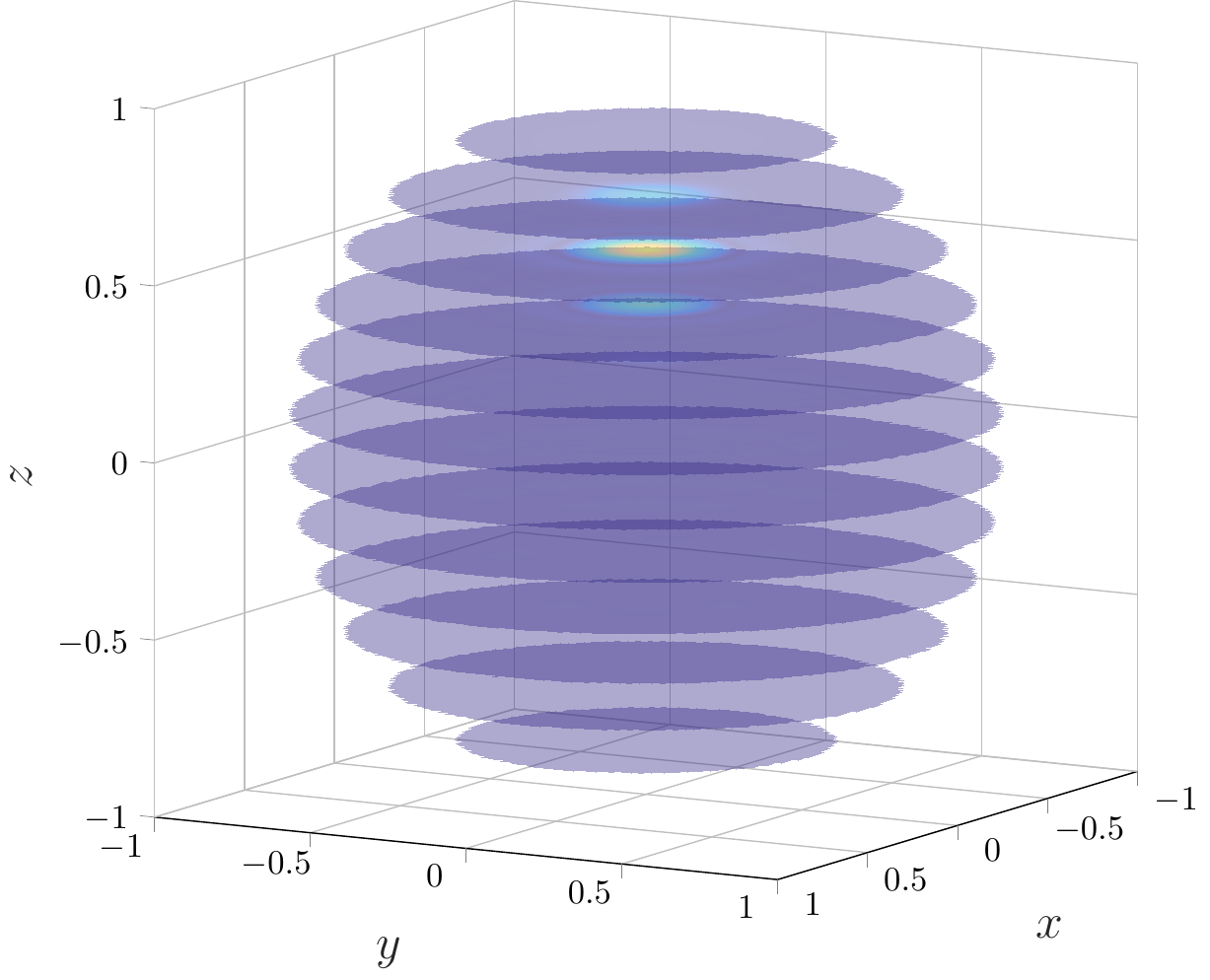}}
\subfigure{\includegraphics[width=0.27\textwidth]{./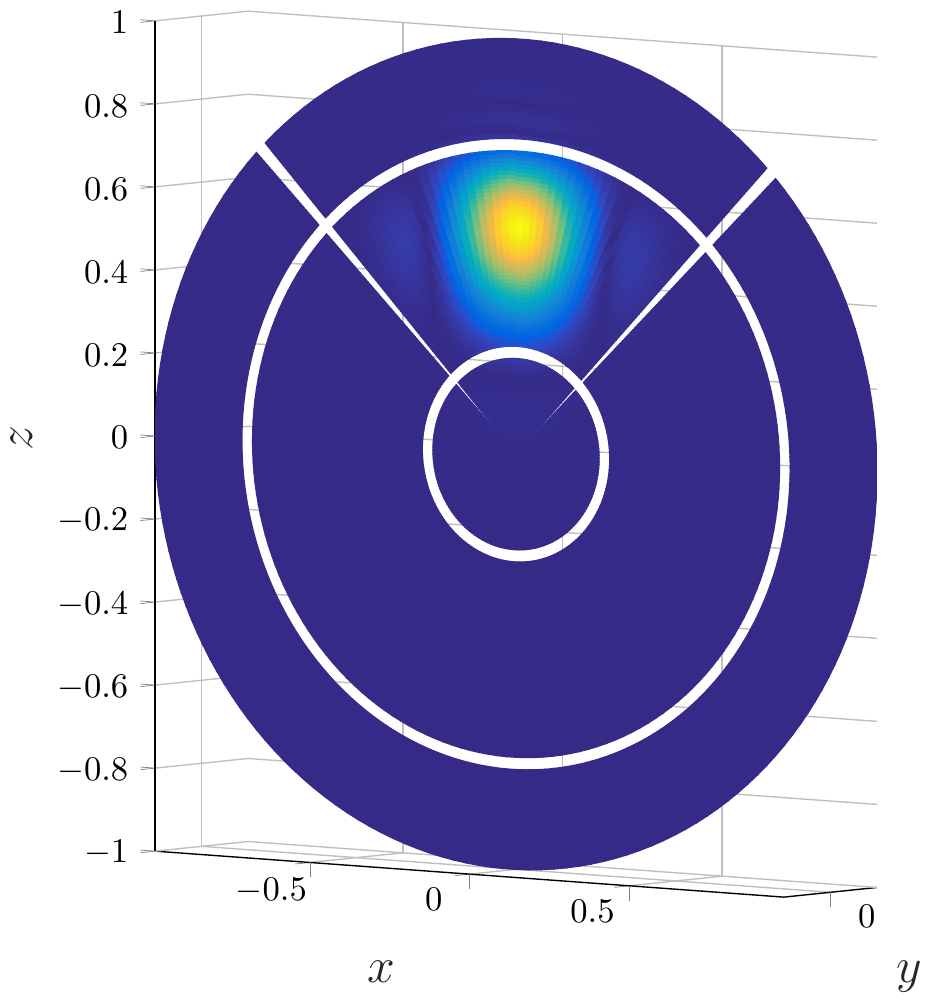}}
\subfigure{\includegraphics[width=0.35\textwidth]{./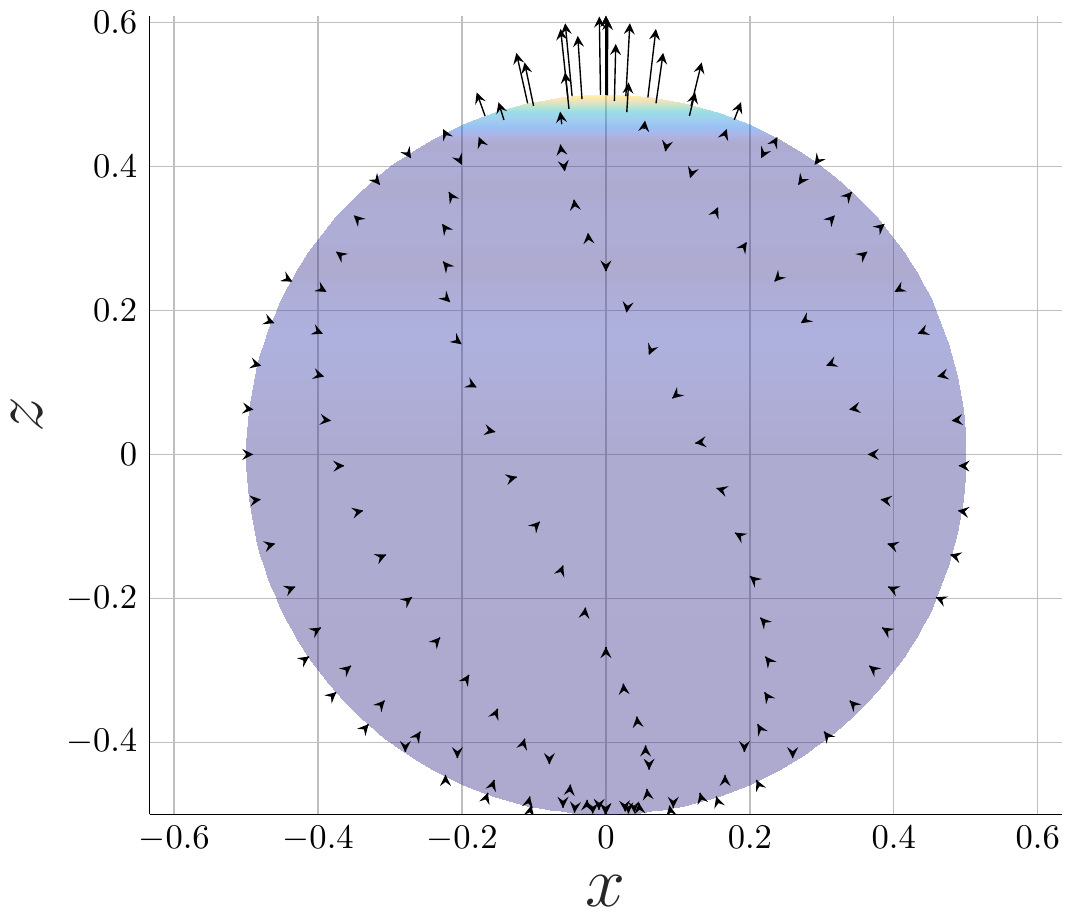}}
\caption{Vectorial Slepian functions on the ball from system I. Here, a normal field with related eigenvalue $0.999056$ is given. The representation of the Euclidean norm in the interior of the ball is shown. Blue depicts values close to zero and yellow stands for large values. Moreover, the vectorial functions are shown on a sphere with radius $0.5$ (right).}
\label{SysIRad}

\subfigure{\includegraphics[width=0.35\textwidth]{./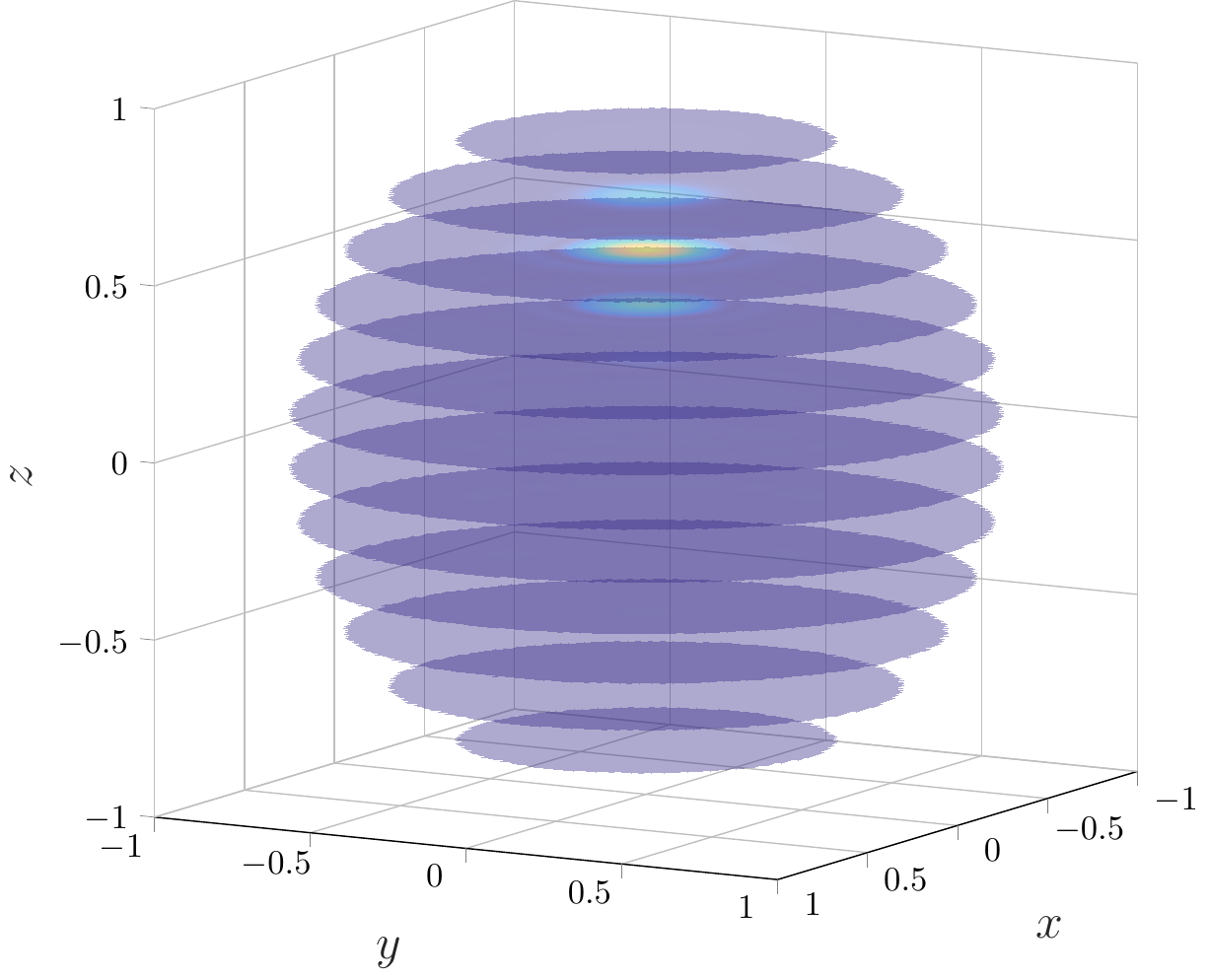}}
\subfigure{\includegraphics[width=0.27\textwidth]{./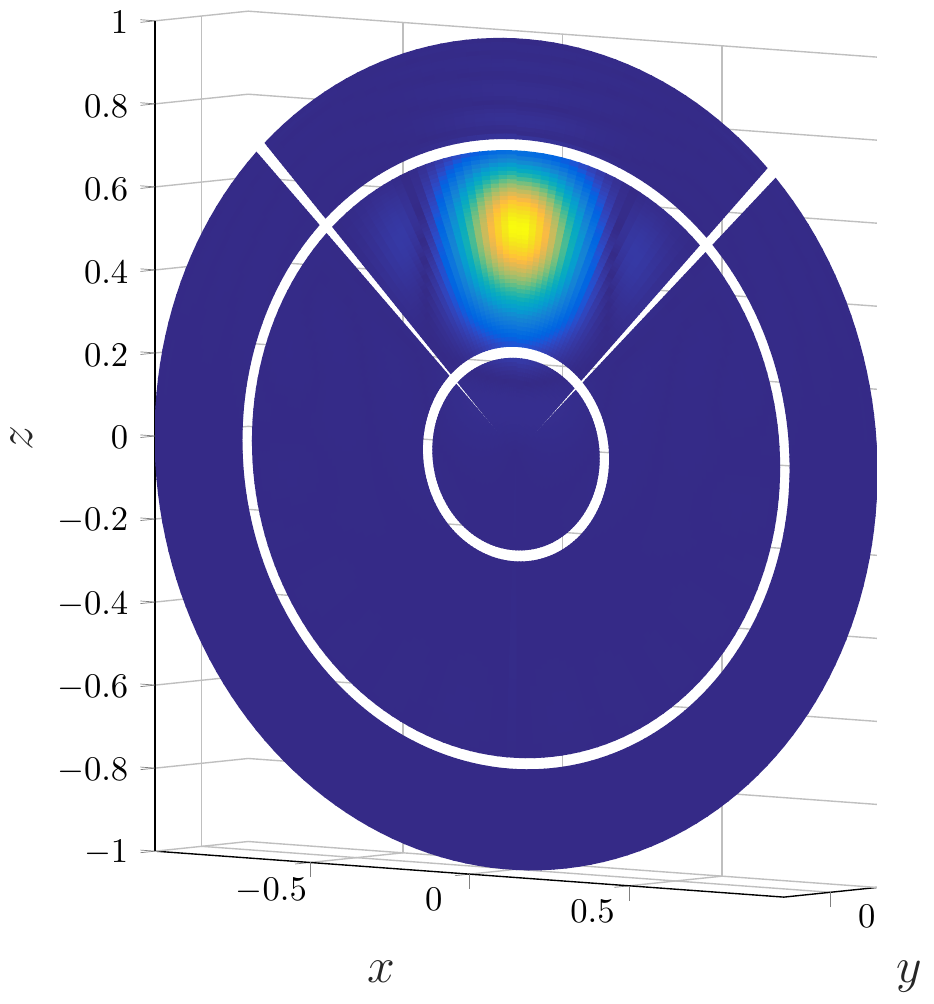}}
\subfigure{\includegraphics[width=0.35\textwidth]{./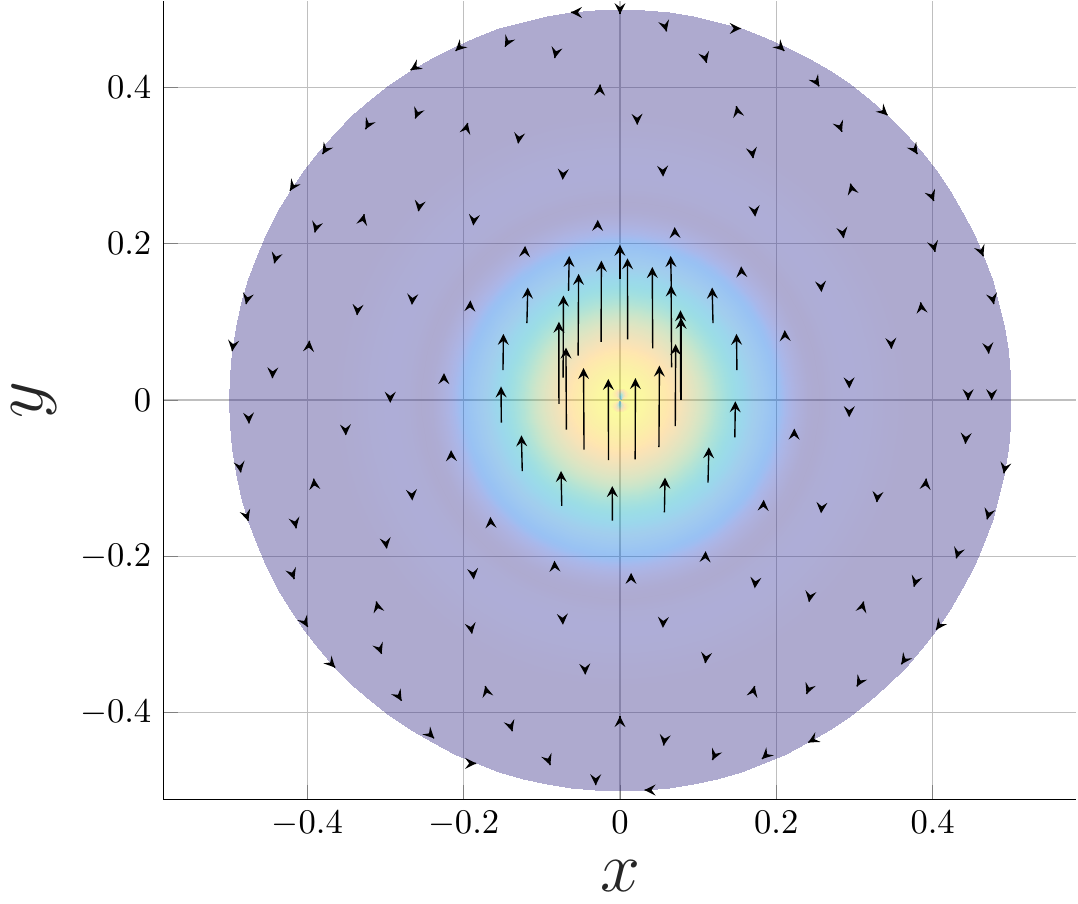}}
\caption{Vectorial Slepian functions on the ball from system I. Here, a tangential field with related eigenvalue $0.999123$ is given. The function is illustrated in a similar manner as in \cref{SysIRad}. Note the rotated coordinate system on the right-hand side (for a better visibility of the vectors).}
\label{SysITan}
\end{figure}

\begin{figure}[!t] 
\centering
\subfigure{\includegraphics[width=0.45\textwidth]{./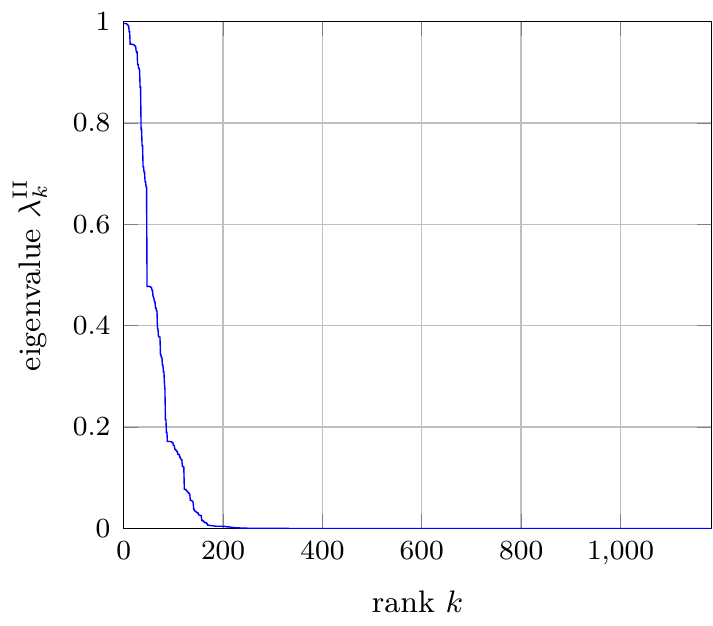}}
\hspace{1cm}
\subfigure{\includegraphics[width=0.45\textwidth]{./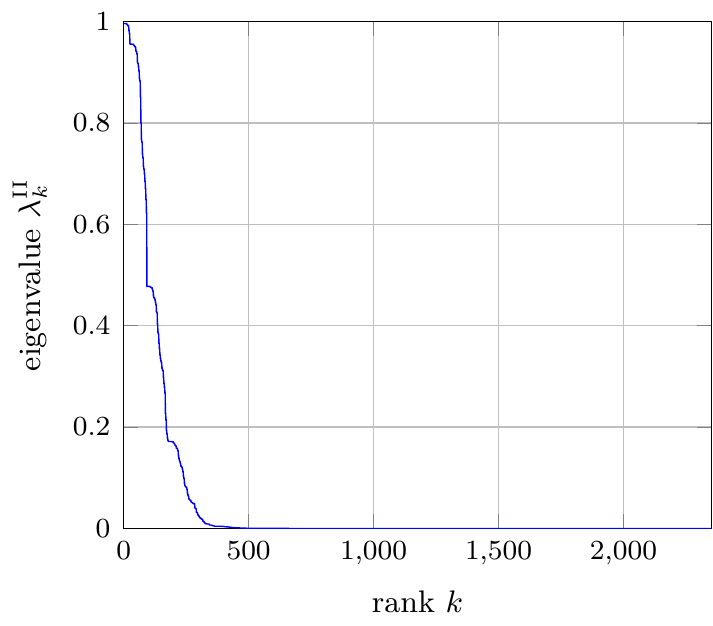}}
\caption{Distribution of the eigenvalues of $P^\mathrm{II}$ (left) and $Q^\mathrm{II}$ (right)-} 
\label{EtaSysII}

\subfigure{\includegraphics[width=0.24\textwidth]{./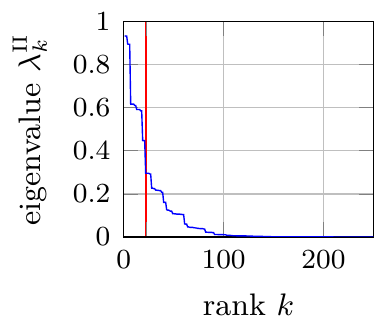}}
\subfigure{\includegraphics[width=0.24\textwidth]{./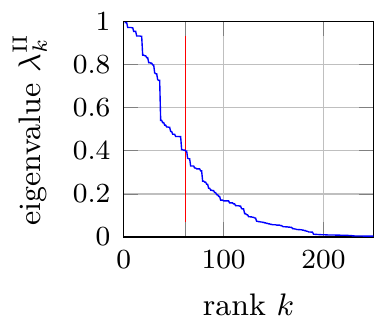}}
\subfigure{\includegraphics[width=0.24\textwidth]{./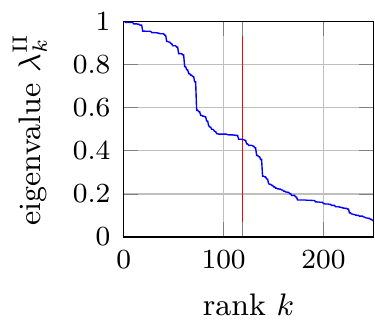}}
\subfigure{\includegraphics[width=0.24\textwidth]{./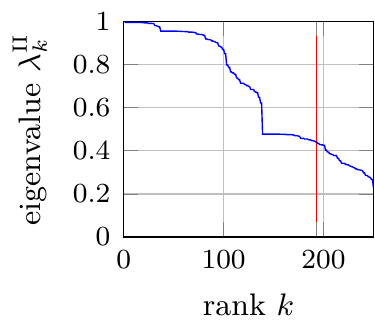}}
\caption{The distribution of the eigenvalues (blue) of $K^\mathrm{II}$  in the cases $\Theta=15^\circ,\ 25^\circ,\ 35^\circ,\ 45^\circ$ (left to right) is shown. With red, the Shannon number is marked. It is given by $S^\mathrm{II} \approx 22,\ 62,\  119,\ 193$ (left to right).}
\label{EtaSysIIShannon}
\end{figure}

\begin{figure}[!t] 
\centering
\subfigure{\includegraphics[width=0.35\textwidth]{./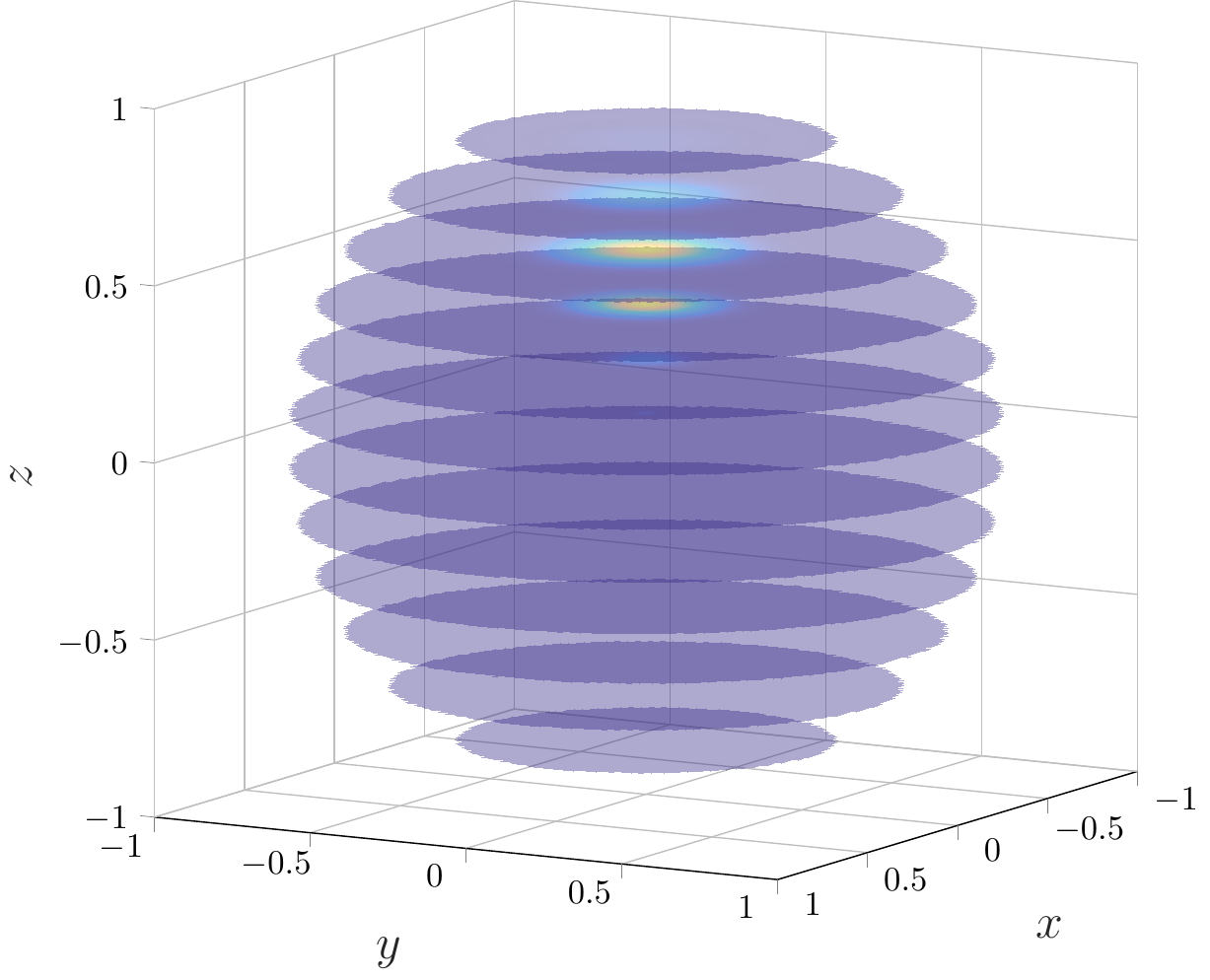}}
\subfigure{\includegraphics[width=0.27\textwidth]{./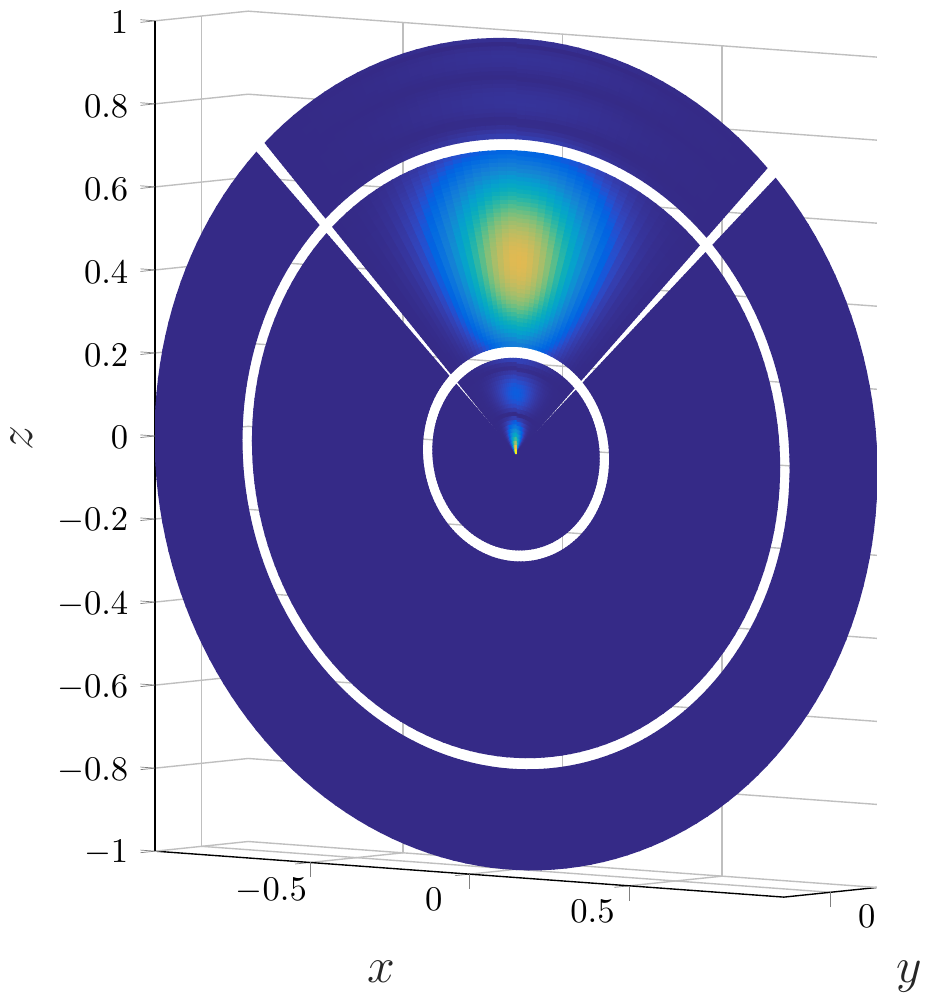}}
\subfigure{\includegraphics[width=0.35\textwidth]{./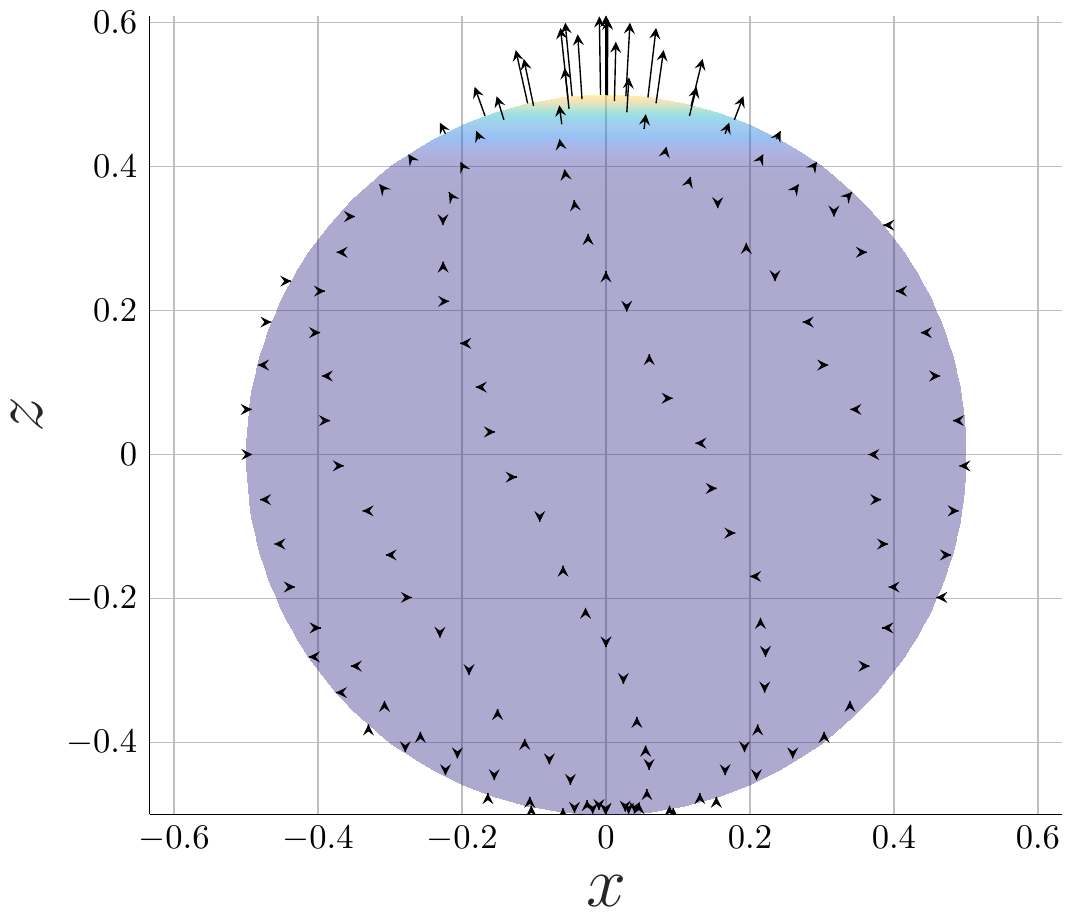}}
\caption{Vectorial Slepian functions on the ball from system II. Here, a normal field with related eigenvalue $0.996101$ is given. The representation of the Euclidean norm in the interior of the ball is shown. Blue depicts values close to zero and yellow stands for large values. Moreover, the vectorial functions are shown on a sphere with radius $0.5$ (right).}
\label{SysIIRad}

\subfigure{\includegraphics[width=0.35\textwidth]{./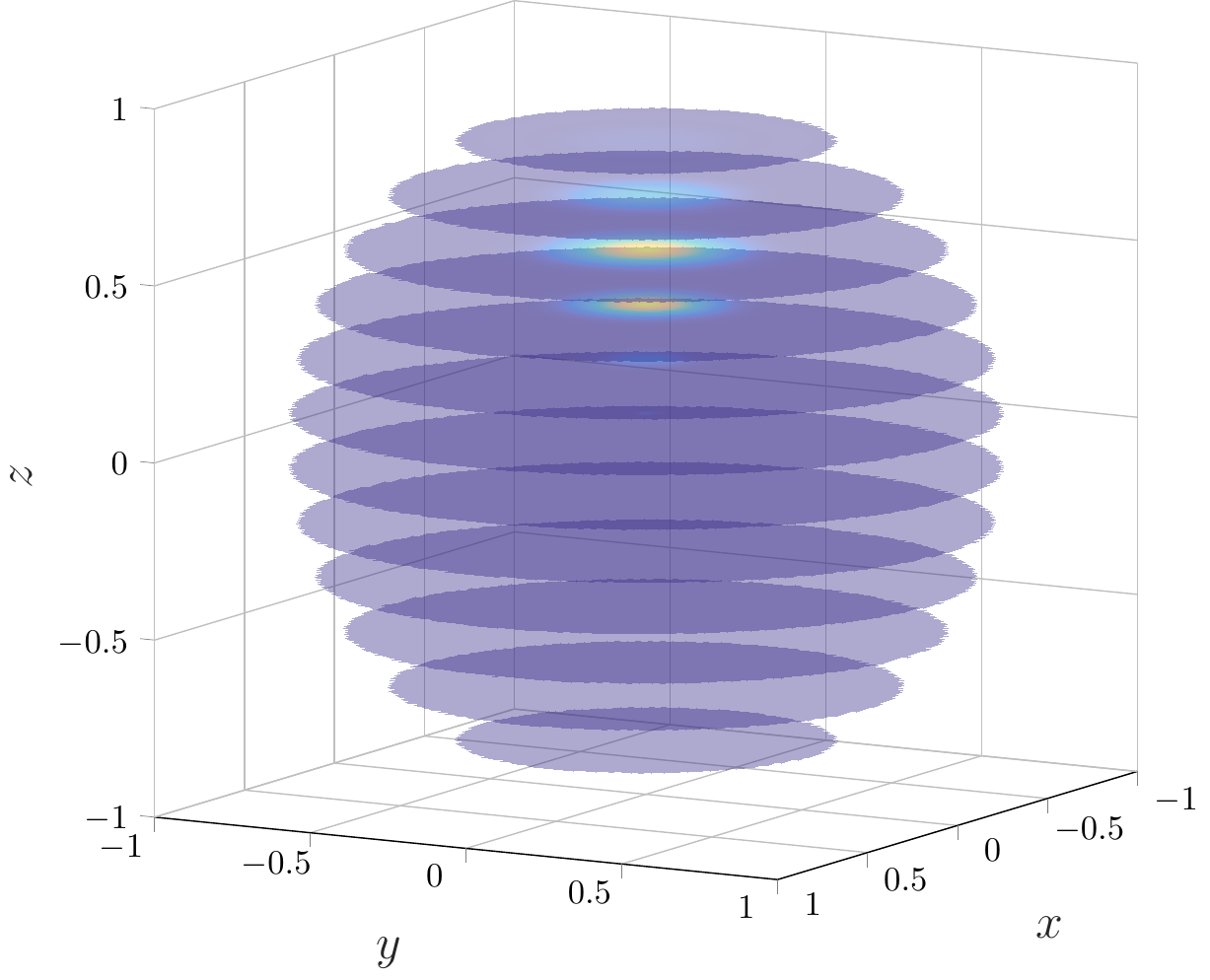}}
\subfigure{\includegraphics[width=0.27\textwidth]{./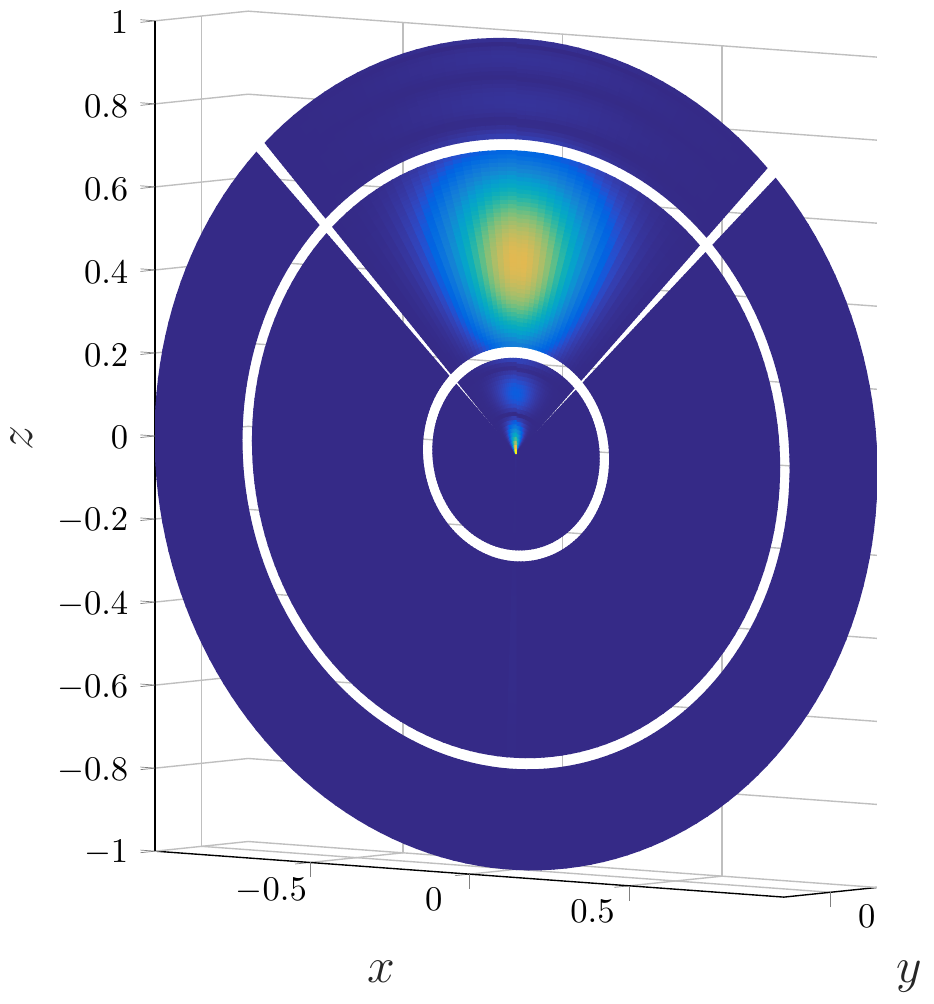}}
\subfigure{\includegraphics[width=0.35\textwidth]{./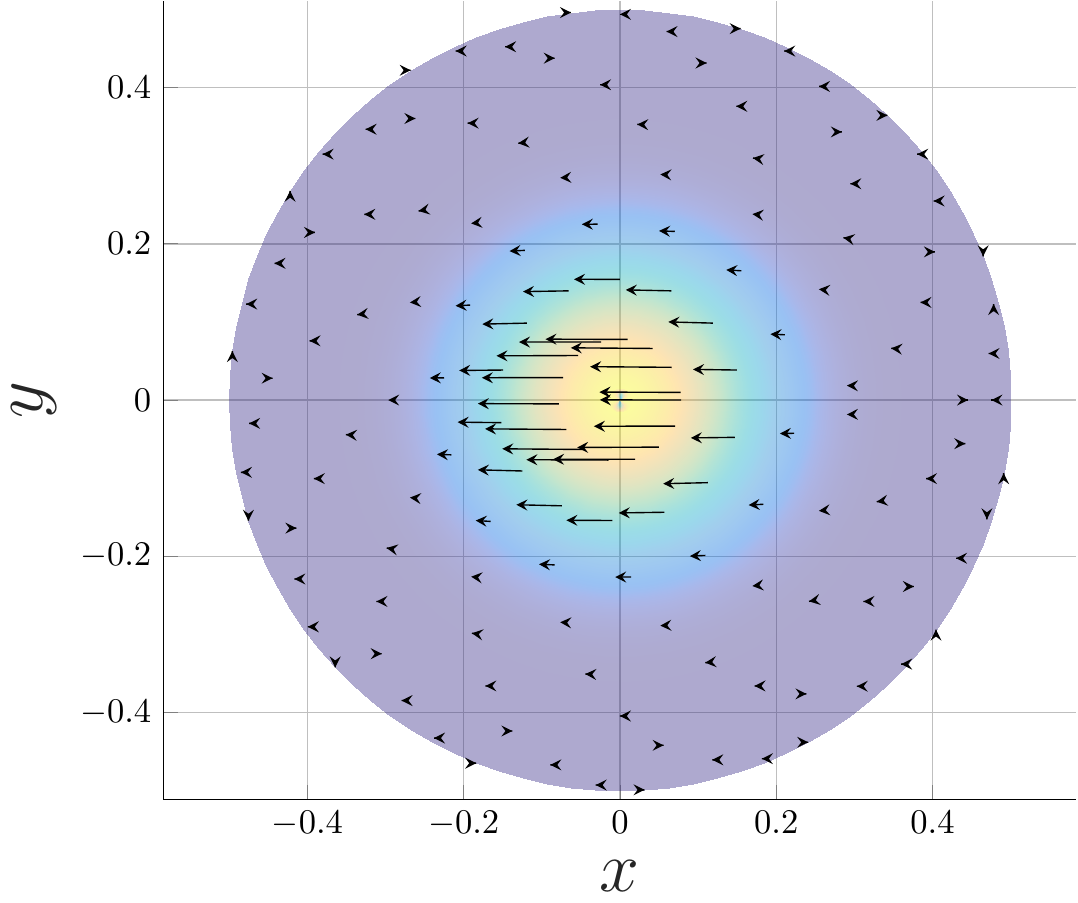}}
\caption{Vectorial Slepian functions on the ball from system II. Here, a tangential field with related eigenvalue $0.996101$ is given. The function is illustrated in a similar manner as in \cref{SysIIRad}. Note the rotated coordinate system on the right-hand side (for a better visibility of the vectors).}
\label{SysIITan}
\end{figure}

\begin{figure}[!t] 
\centering
\subfigure{\includegraphics[width=0.45\textwidth]{./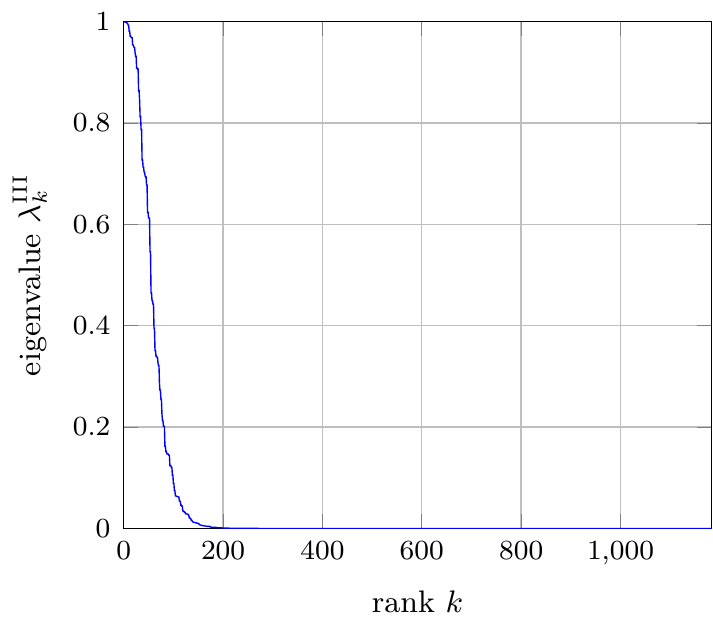}}
\hspace{1cm}
\subfigure{\includegraphics[width=0.45\textwidth]{./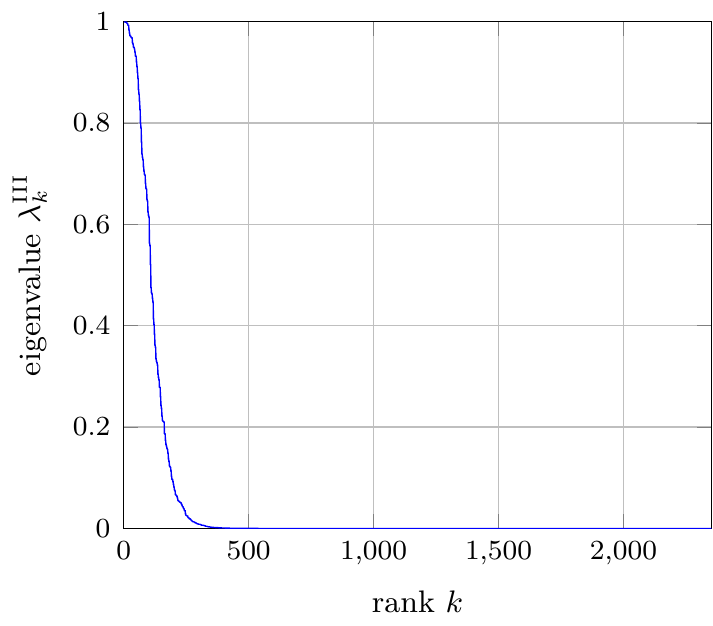}}
\caption{Distribution of the eigenvalues of $P^\mathrm{III}$ (left) and $Q^\mathrm{III}$ (right).} 
\label{EtaSysIII}

\subfigure{\includegraphics[width=0.24\textwidth]{./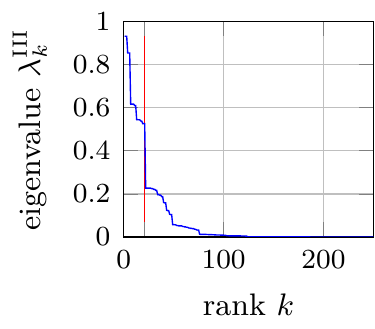}}
\subfigure{\includegraphics[width=0.24\textwidth]{./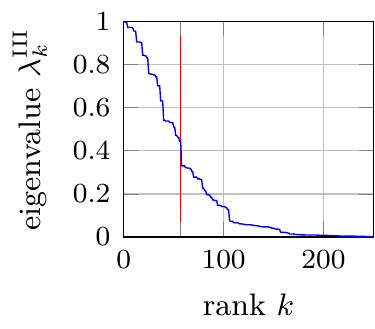}}
\subfigure{\includegraphics[width=0.24\textwidth]{./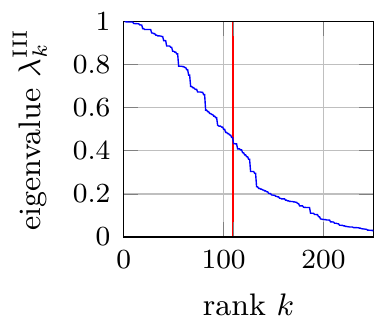}}
\subfigure{\includegraphics[width=0.24\textwidth]{./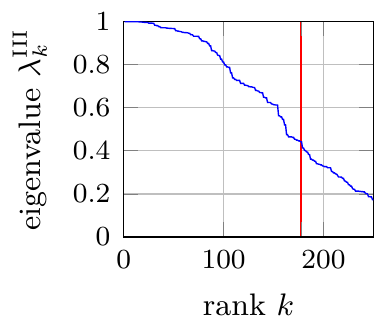}}
\caption{The distribution of the eigenvalues (blue) of $K^\mathrm{III}$  in the cases $\Theta=15^\circ,\ 25^\circ,\ 35^\circ,\ 45^\circ$ (left to right) is shown. With red, the Shannon number is marked. It is given by $S^\mathrm{III} \approx 21,\ 57,\ 109,\ 177$ (left to right).}
\label{EtaSysIIIShannon}
\end{figure}

\begin{figure}[!t] 
\centering
\subfigure{\includegraphics[width=0.35\textwidth]{./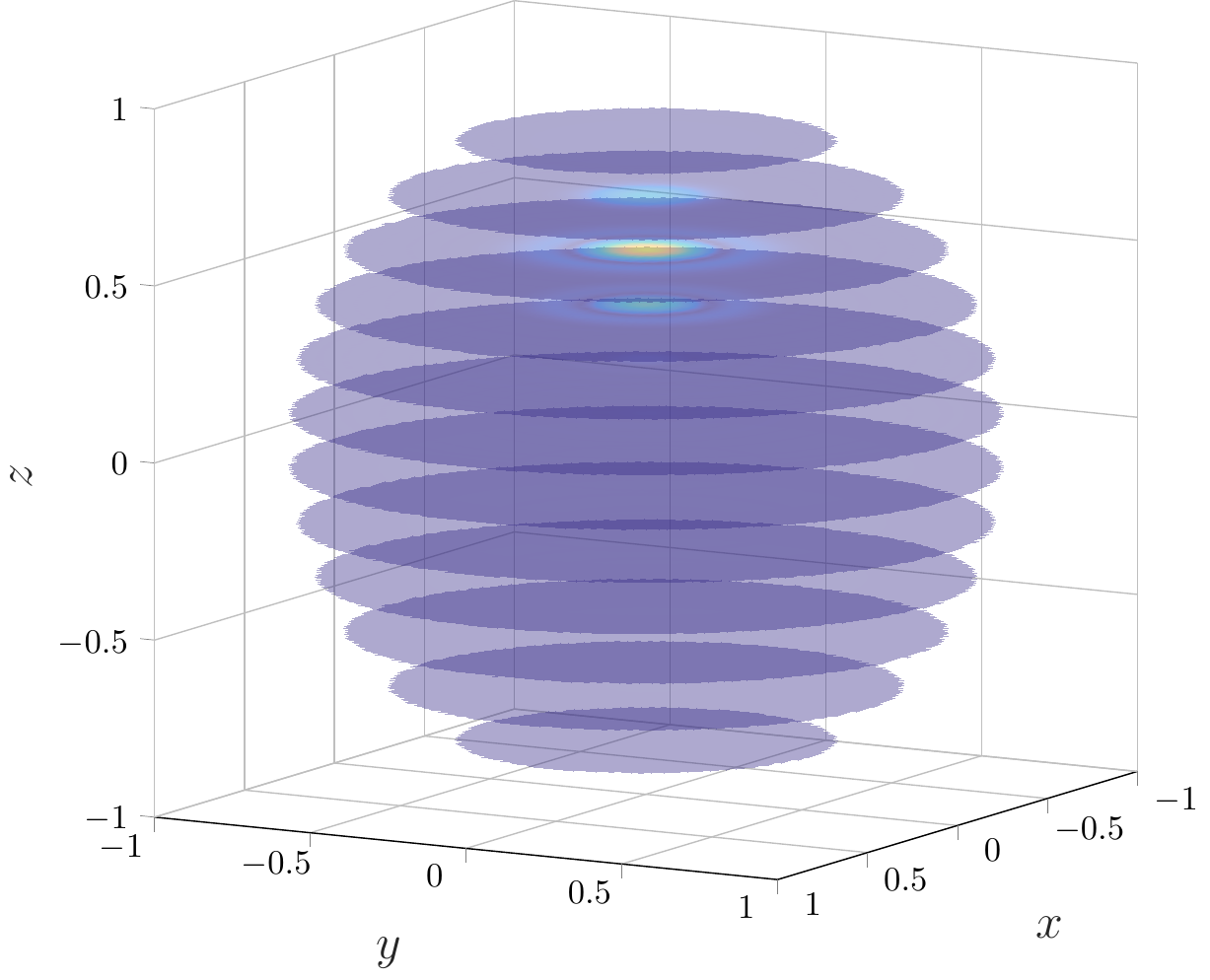}}
\subfigure{\includegraphics[width=0.27\textwidth]{./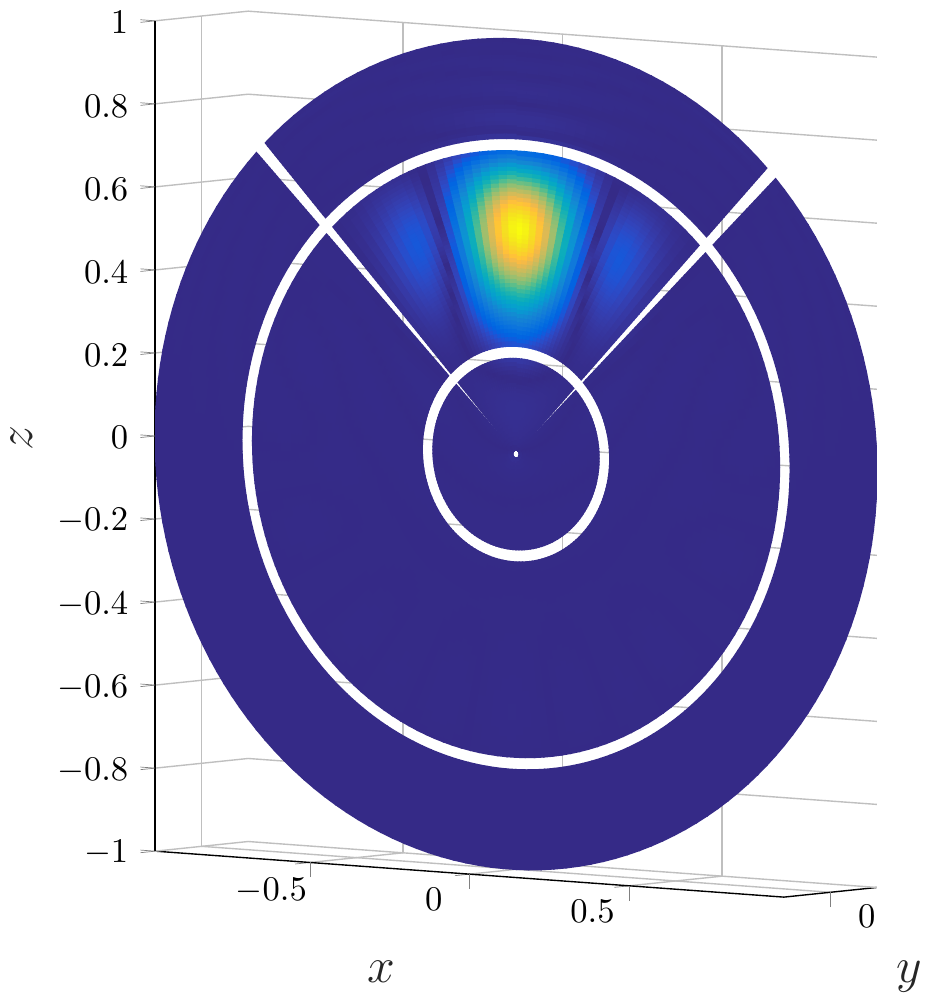}}
\subfigure{\includegraphics[width=0.35\textwidth]{./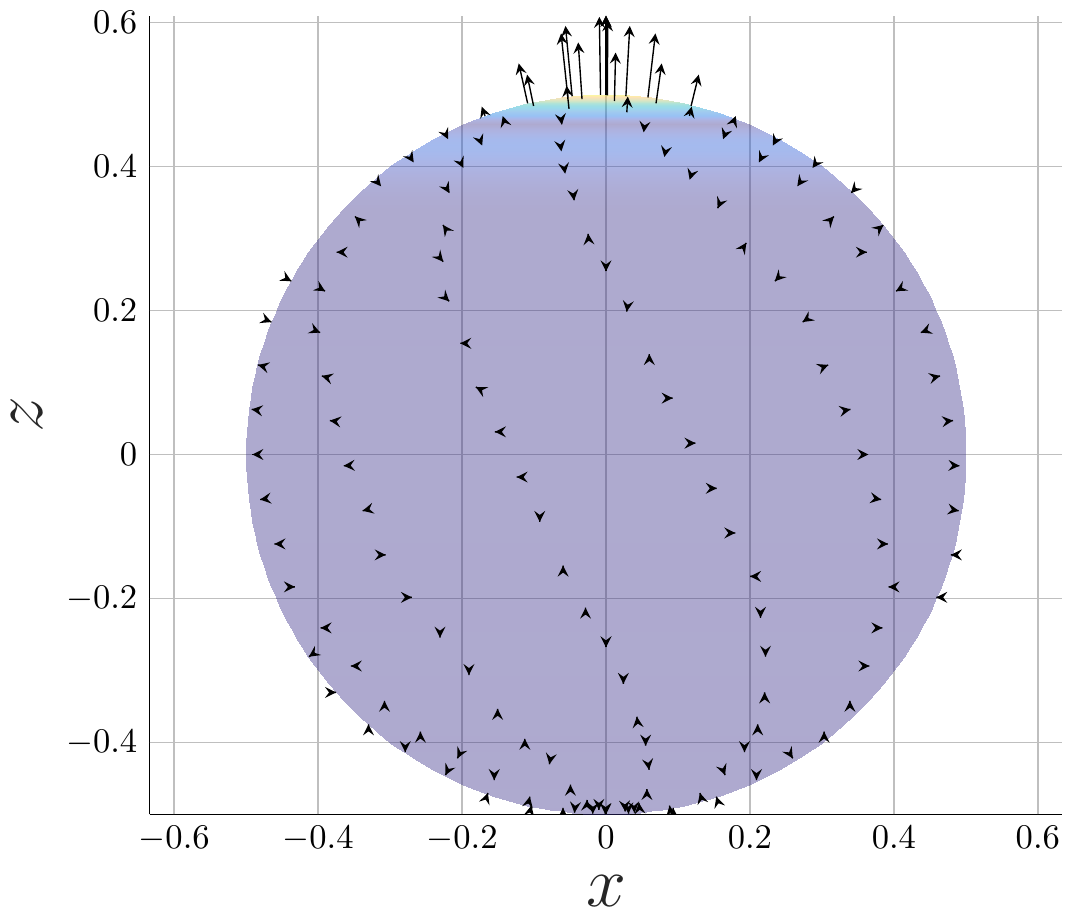}}
\caption{Vectorial Slepian functions on the ball from system III. Here, a normal field with related eigenvalue $0.998982$ is given. The representation of the Euclidean norm in the interior of the ball is shown. Blue depicts values close to zero and yellow stands for large values. Moreover, the vectorial functions are shown on a sphere with radius $0.5$ (right).}
\label{SysIIIRad}

\subfigure{\includegraphics[width=0.35\textwidth]{./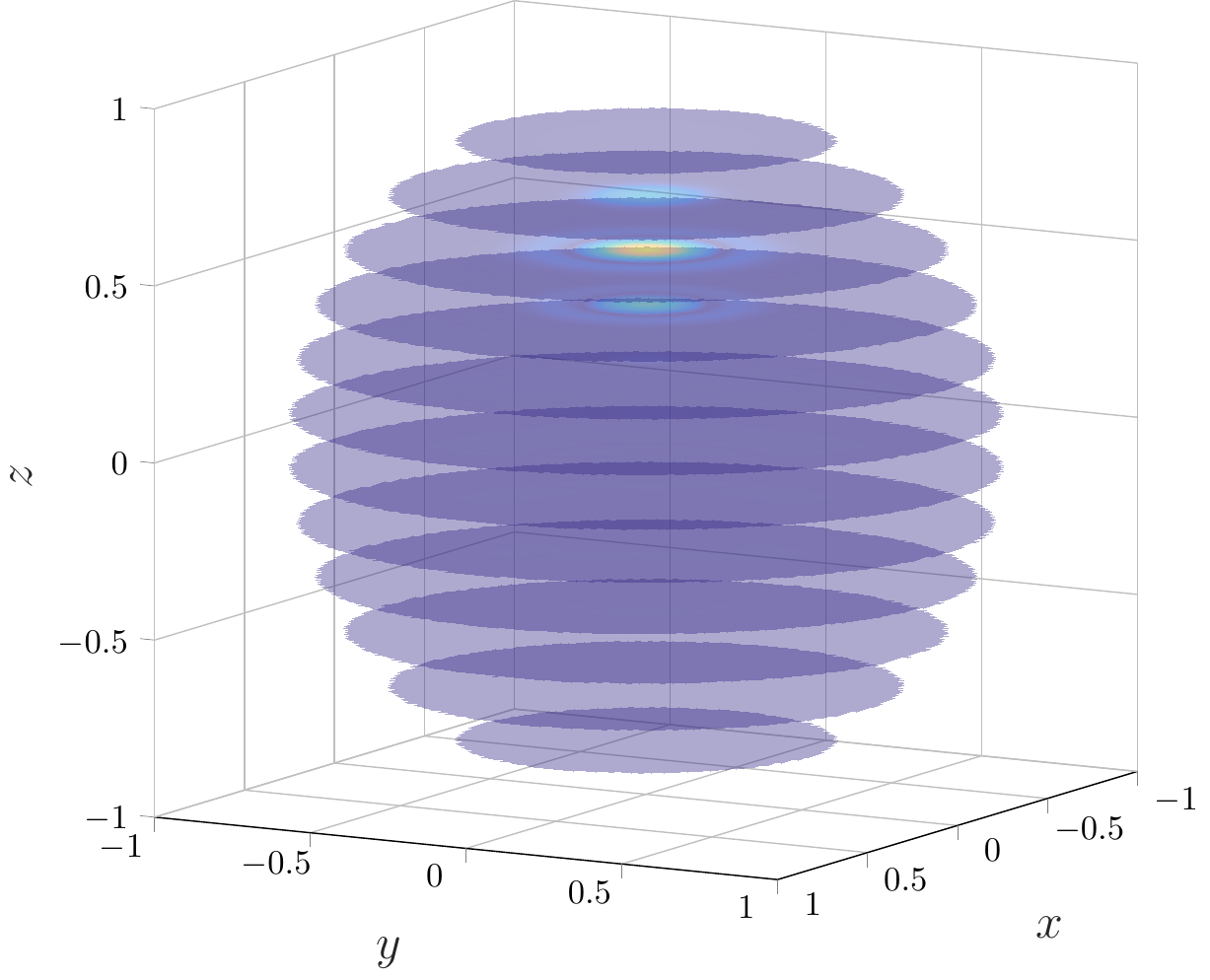}}
\subfigure{\includegraphics[width=0.27\textwidth]{./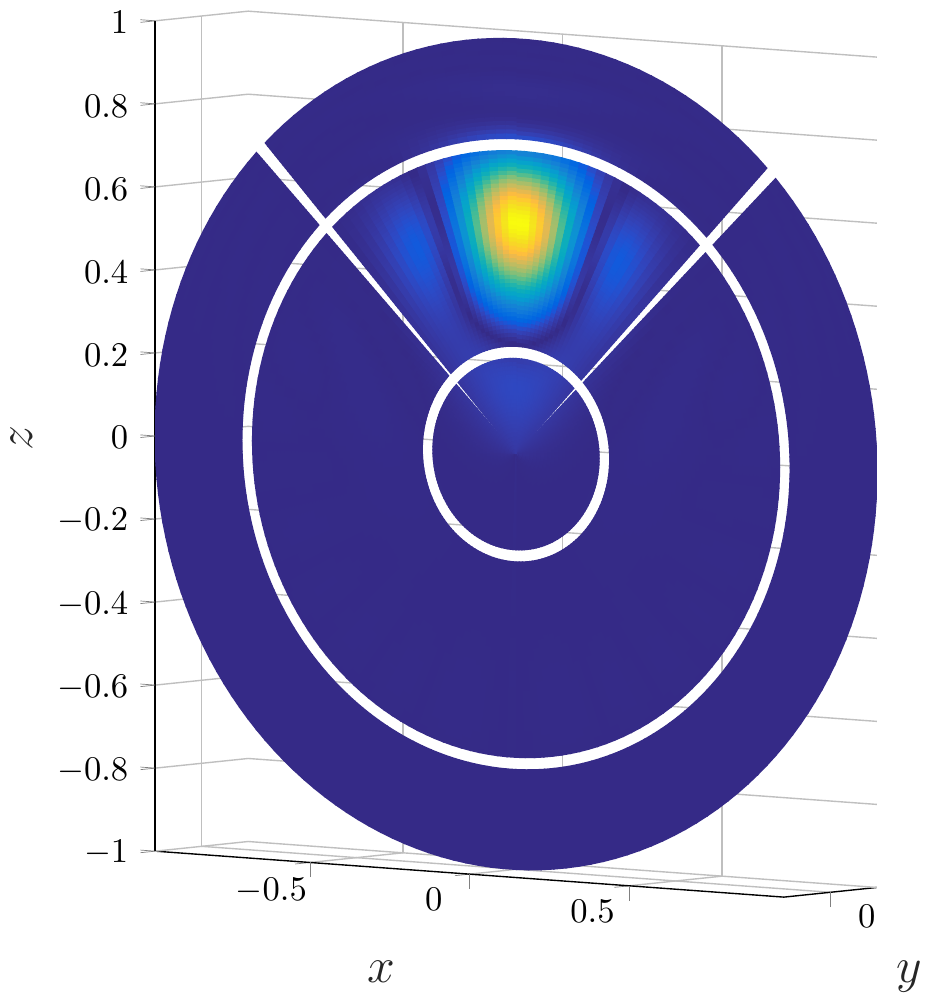}}
\subfigure{\includegraphics[width=0.35\textwidth]{./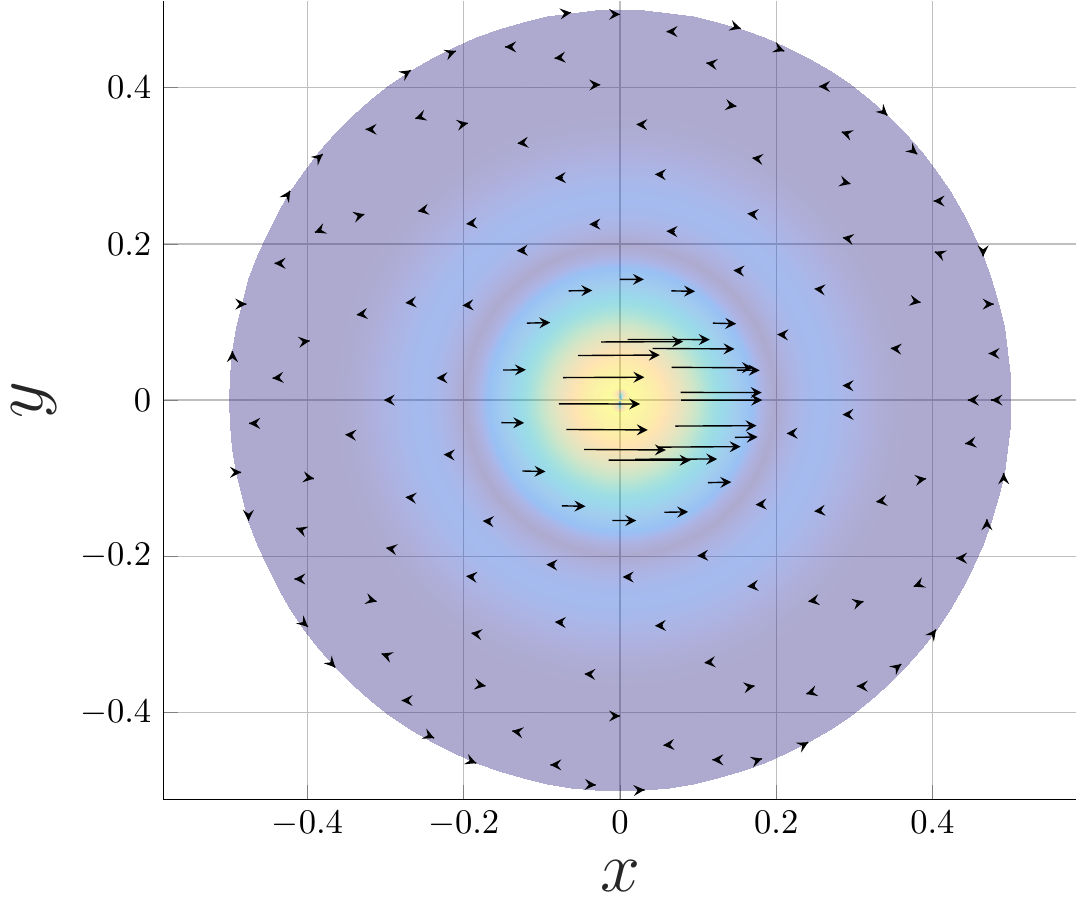}}
\caption{Vectorial Slepian functions on the ball from system III. Here, a tangential field with related eigenvalue $0.998987$ is given. The function is illustrated in a similar manner as in \cref{SysIIIRad}. Note the rotated coordinate system on the right-hand side (for a better visibility of the vectors).}
\label{SysIIITan}
\end{figure}

\begin{figure}[!t] 
\centering
\subfigure{\includegraphics[width=0.35\textwidth]{./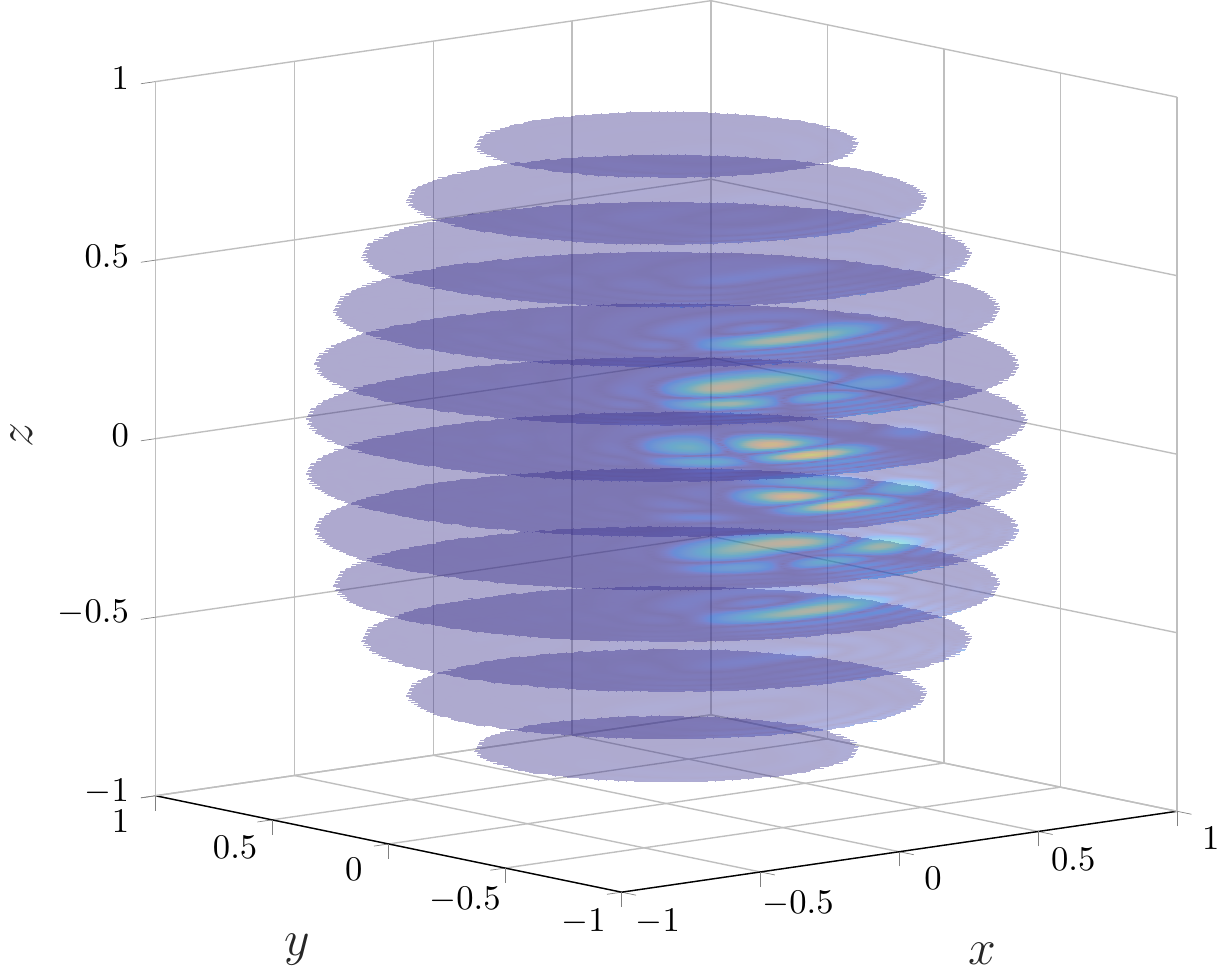}}
\subfigure{\includegraphics[width=0.27\textwidth]{./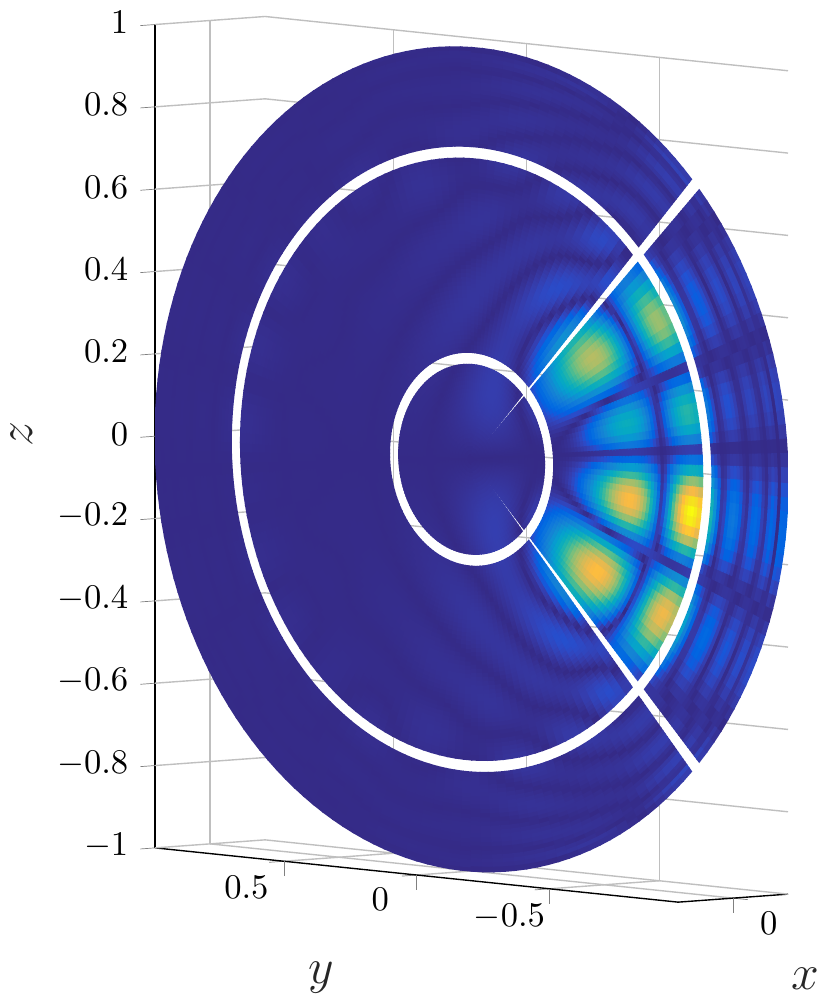}}
\subfigure{\includegraphics[width=0.35\textwidth]{./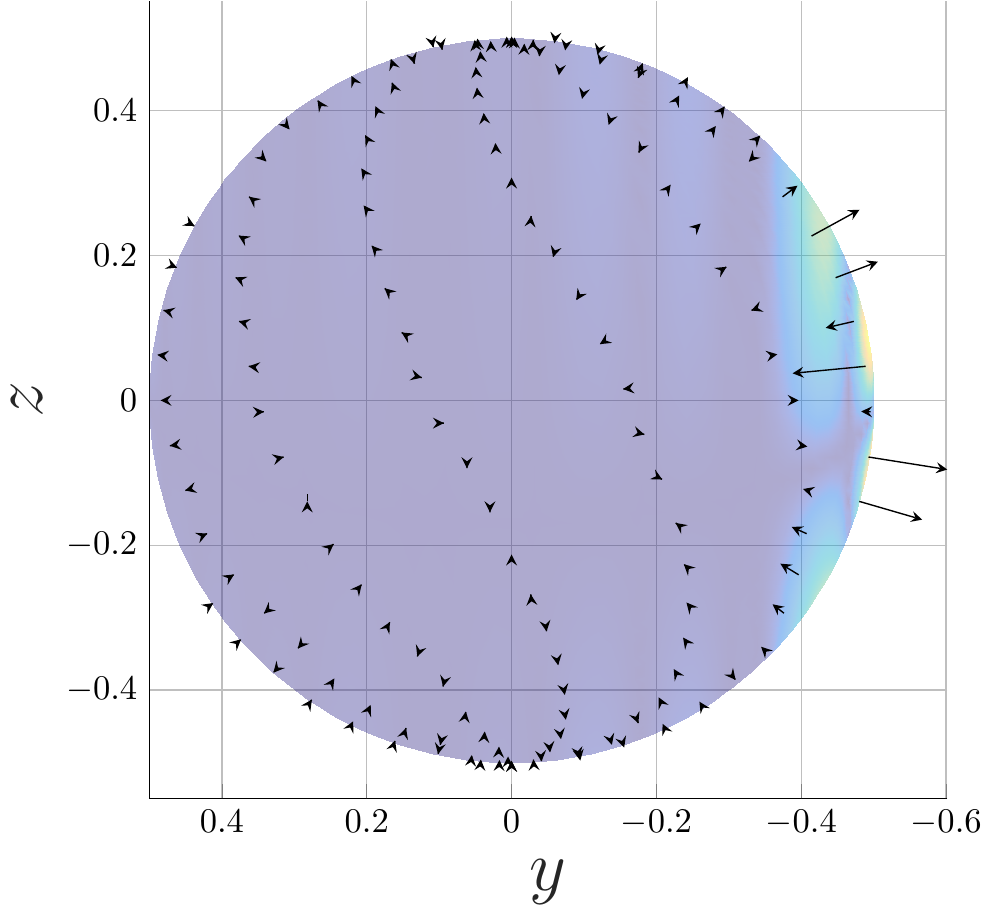}}
\caption{Vectorial Slepian functions on the ball from system I. Here, a normal field with related eigenvalue $0.909985$ is given. The representation of the Euclidean norm in the interior of the ball is shown. Blue depicts values close to zero and yellow stands for large values. Moreover, the vectorial functions are shown on a sphere with radius $0.5$ (right).}
\label{SysIRotRad}

\subfigure{\includegraphics[width=0.35\textwidth]{./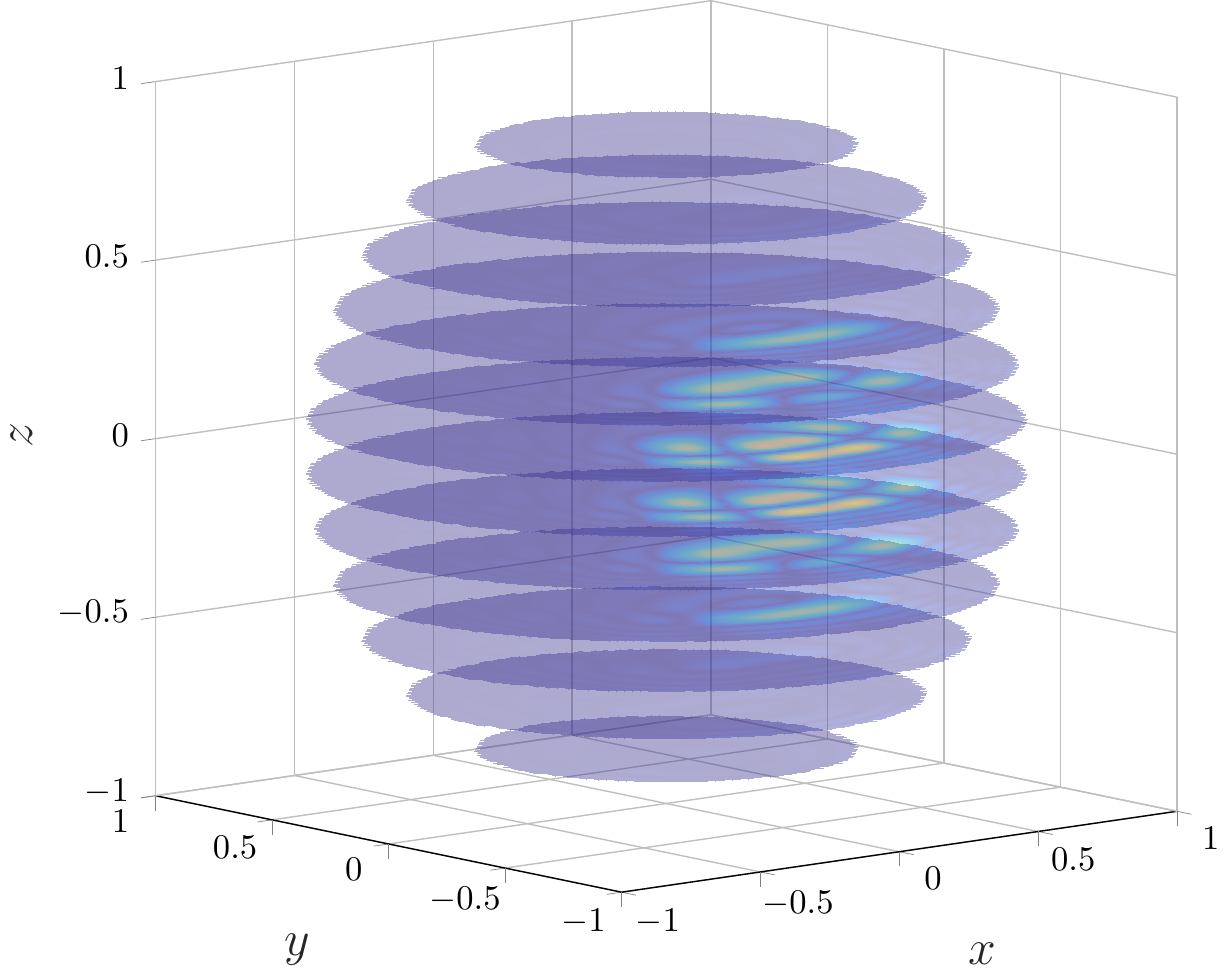}}
\subfigure{\includegraphics[width=0.27\textwidth]{./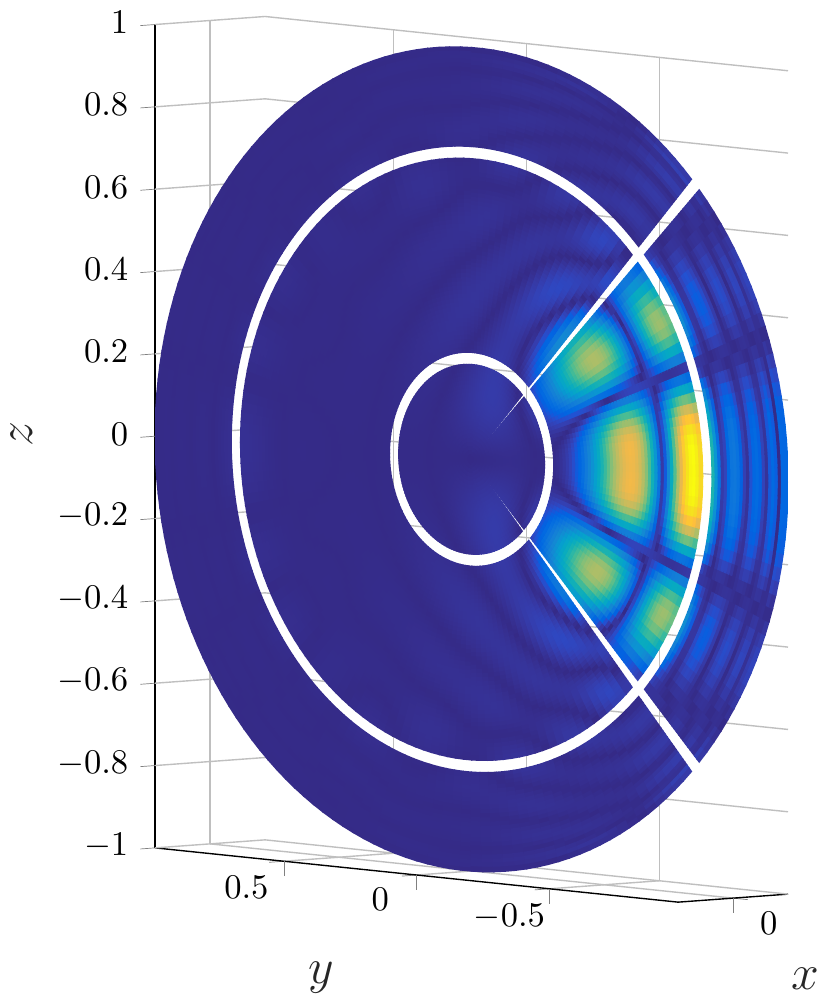}}
\subfigure{\includegraphics[width=0.35\textwidth]{./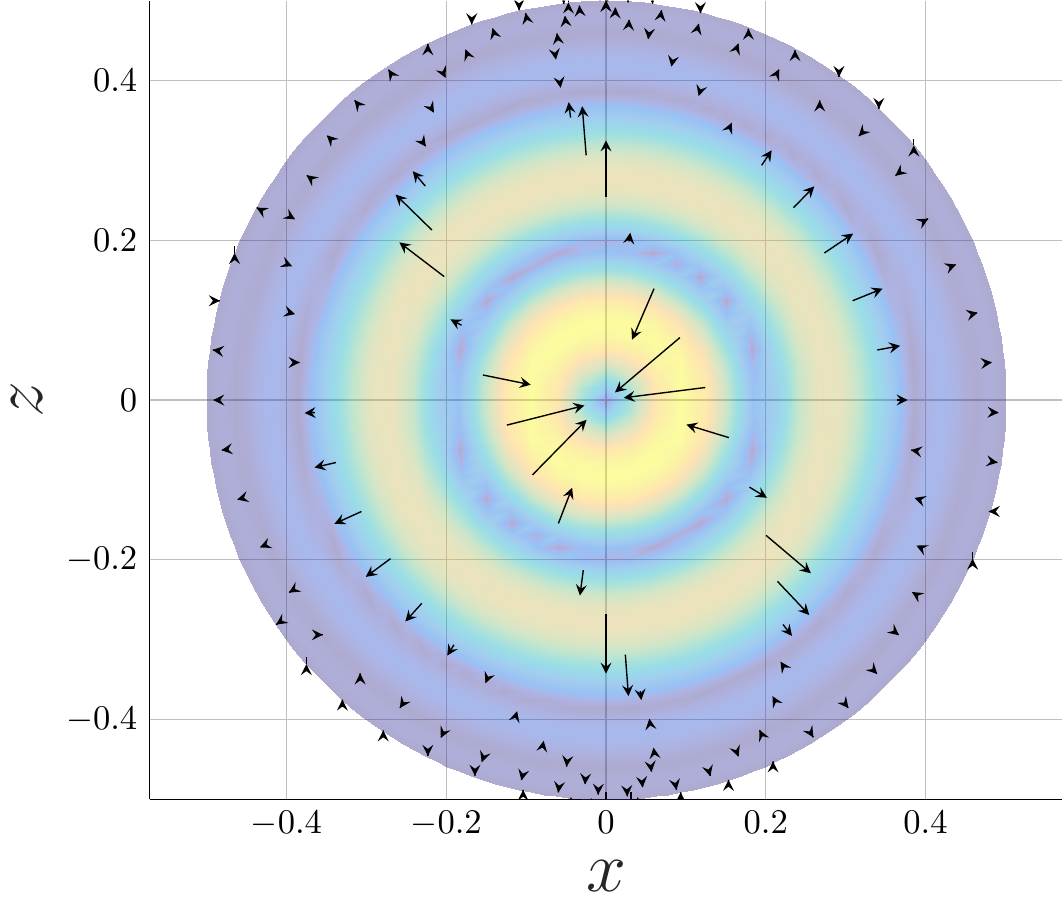}}
\caption{Vectorial Slepian functions on the ball from system I. Here, a tangential field with related eigenvalue $0.909980$ is given. The function is illustrated in a similar manner as in \cref{SysIRotRad}. Note the rotated coordinate system on the right-hand side (for a better visibility of the vectors).}
\label{SysIRotTan}
\end{figure}

Next, some numerical results are presented. For this purpose, some notes on the implementation and the setting of the experiments are made. After that the distribution of the eigenvalues is discussed and the constructed functions, the Shannon number as well as the rotated vector fields are evaluated.

First of all, note the order of the presented results. The results are sorted by the chosen systems. This means that \crefrange{EtaSysI}{SysITan} belong to system I and include the distribution of the eigenvalues, the Shannon number as well as a presentation of a normal field and a tangential field with large eigenvalues from different perspectives. In analogy, \crefrange{EtaSysII}{SysIITan} consider system II, \crefrange{EtaSysIII}{SysIIITan} belong to system III and \crefrange{SysIRotRad}{SysIRotTan} show some rotated results of system I. 

On subsets of $\ball$, the normal field of type 1 is orthonormal to both tangential fields of types 2 and 3 of any system. Thus, the implementation solves the problems independently. It follows the lines of the description of solving the localisation problem given above. In the following, we use the GNU Scientific Library (GSL) as documented in \cite{GSLdocu}. All remaining one-dimensional integrals are computed with the so-called `QAG adaptive integration' which is an adaptive integration method using Gauß-Kronrod quadrature formulae, see \cite[Sect.~17.3]{GSLdocu}. The term `QAG' defines that it is a quadrature routine (Q) with an adaptive integrator (A) and a user-defined general integrand (G), see \cite[p.~192]{GSLdocu}. We set the absolute error limit as well as the relative error limit to $10^{-12}$. We allow a maximum of $1000$ subintervals and use a 61 point Gauß-Kronrod rule in each subinterval. For further literature on adaptive integration methods or the Gauß-Kronrod rule see, for instance, \cite{adaptiveInt1,GaussKronrod1}, \cite[Sect.~42]{HankeBourg} and \cite[Sect.~7.5]{Schwarz}.

The Jacobi polynomials are highly oscillating in the origin in our setting. Throughout our research, it turned out that a non-adaptive integration method does not seem to be able to cope with these oscillations very well. An adaptive integration method, however, refines the integration grid autonomously in such areas. With the use of quadrature formulae with a high degree of exactness, this ansatz yields far more accurate integral values.

With these values, the localisation matrix is composed. Next, the eigenvalue problem is solved using the respective method implemented in the GSL, see \cite[Sect.~15.1]{GSLdocu}. For this type of matrices, a symmetric bidiagonalisation in combination with a QR reduction method can be applied, see, for instance, \cite[Sect.~5.5.3]{Schwarz} and \cite[Sect.~8.3]{GolubVanLoan}.  

After that, the Fourier coefficients of the additional rotated version of the Slepian functions are calculated. This is done using Wigner rotation matrices for vector fields. It follows the line of, for instance, \cite[App.~C.8]{Tromp1998}. At last, the values of the (rotated and non-rotated) vectorial Slepian functions on the ball at certain points in the ball are calculated with the use of their Fourier expansion. 

Recall that the bandlimits are denoted by $M \in \uint_0$ and $N \in \uint_{0_i}$ and the boundaries of the original partial cone are given by $a,\ b \in [0, \beta]$ for the radius $\beta$ of the ball and $\Theta \in [0, \pi]$. In the sequel, these parameters have to be fixed for the numerical experiments. Independent of the system, the choices for the parameters or `settings' \space of the experiments are given as
$$ M = 6, \quad N = 12, \quad a = 0.25, \quad b = 0.75, \quad \Theta = 45^\circ = \frac{\pi}{4}.$$ The ball is always chosen to be the unit ball $\ball$, i.e.\ the radius is $\beta =1$. Thus, \cref{OPC} pictures the setting of the localisation region. However, note that some pictures illustrate the functions on a sphere. For this, the sphere with radius $0.5$ is chosen for the plotting.
For experiments with respect to the Shannon number, the eigenvalues of the vectorial Slepian functions are calculated for diverse scenarios with the angle of the localisation region $\Theta$ varying between $15^\circ,\ 25^\circ,\ 35^\circ$ and $ 45^\circ$. However, the choices of $M,\ N,\ a$ and $b$ remain fixed. At last, some results of system I are also presented with respect to a rotation of the localisation region where all three Euler angles are chosen as $\frac{\pi}{2}.$  

\paragraph{The distribution of the eigenvalues}

The construction of the vectorial Slepian functions can be divided into the calculation of the normal fields and the tangential fields. The matrices $P^\star$ and $Q^\star$, as seen in \cref{LocMat}, stand for the respective problems. 
The distributions of the eigenvalues are depicted in \cref{EtaSysI,EtaSysII,EtaSysIII}. In each illustration, the values are sorted in descending order. 

The results of $P^\star$ as well as $Q^\star$ are generally as expected. The eigenvalues are situated in the interval $[0,1]$. Further, both distributions show a rapid decrease for each underlying basis system. This is explained by the fact that the localisation region is small compared to the whole of the unit ball $\ball$ as seen in \cref{OPC}. Note that the number of used basis functions with respect to the normal fields is 1183. The tangential fields use 2352 basis functions.

\paragraph{Evaluation of the functions}
At first, a few general properties are remarked. It is noticed that functions related to higher eigenvalues have less extrema as seen for instance in \cref{SysIRad,SysITan} in comparison to \cref{SysIRotRad,SysIRotTan} where we purposely selected a Slepian function with a lower eigenvalue for each case.
Further, Slepian functions with one angular and one radial extremum were obtained in each experiment. These look very similar with respect to their Euclidean norm as seen, for example, in \cref{SysIIRad,SysIITan}.

With respect to the results of systems I and III, the visually best-localised vectorial Slepian function on the ball is not necessarily the one related to the highest eigenvalue. However, its related eigenvalue is close to the maximal eigenvalue of the respective normal or tangential problem. The examples presented here in \cref{SysIRad,SysITan,SysIIIRad,SysIIITan} show mainly functions related to one of the largest eigenvalues. 

Regarding the directions of the vector fields in \cref{SysIRad,SysITan,SysIIRad,SysIITan,SysIIIRad,SysIIITan}, the functions depict the usual normal as well as tangential fields. Note that, with respect to the normal fields, the directions are either outer or inner normal vectors of a respective sphere. However, in general, the tangential fields show some properties that are in need of an explanation. A general representation of a tangential Slepian vector field is given by 
\begin{align*}
f^\star (x) = \sum_{i=2}^3\ \sum_{m,n,j}^{M,N}\ f^\star_{i,m,n,j}\ g_{m,n,j}^{(\star,i)}(x), \qquad x \in \ball. 
\end{align*}
In contrast to this, some functions possess the properties of the directions of the vector spherical harmonics as pictured, for instance, in \cite[p.~34]{FenglerDiss}. Some tangential Slepian vector fields have a vanishing surface curl. For others, the surface divergence is zero. This is explained as follows. The localisation submatrix $Q^\mathrm{\star}$ can be rearranged into a block-diagonal structure. This is done similarly as presented in \cite{Simons2014}. This rearrangement yields
\setlength{\arraycolsep}{6pt}
\begin{align*} 
Q^\star = 
\begin{pmatrix} 
Q_0^\star &0&&\cdots&& 0 \\
0 & Q_{-1}^\star &0&&& \\
&0& Q_1^\star &0&&\vdots\\
\vdots&&\ddots& \ddots &\ddots& \\
&&&0& Q_{-N}^\star  &0\\
0 &&\cdots&&0& Q_N^\star \\
\end{pmatrix}.
\end{align*}
\setlength{\arraycolsep}{2pt}

The submatrices $Q^\star_j$ have the form
\begin{align*} 
Q^\star_j = \begin{pmatrix}
B^\star_j & D^\star_j \\ \left( D^\star_j \right)^\mathrm{T} & C^\star_j 
\end{pmatrix}
= \begin{pmatrix}
K^\star_{(2,m,n,j),(2,m',n',j)} & K^\star_{(2,m,n,j),(3,m',n',-j)} \\ K^\star_{(3,m',n',-j),(2,m,n,j)} & K^\star_{(3,m,n,j),(3,m',n',j)} 
\end{pmatrix}
\end{align*}
for fixed $j=-N, \dots, N$ and running $m,\ m' =0, \dots,M$ and $n,\ n' = |j|+\delta_{j0}, \dots, N$.
Regarding a submatrix $D^\star_j$, the factor $j$ is contained in the entries $K^\star_{(2,m,n,j),(3,m',n',j')} $ and $K^\star_{(3,m',n',j),(2,m,n,j')} $, respectively as seen in \cref{MatrixEntries}. If $j=0$, these matrix coefficients vanish, i.e.\ $D_0^\star = 0$. Hence, the submatrix $Q^\star_0$ for this case is again a block-diagonal matrix:
\begin{align*} 
Q^\star_0 = \begin{pmatrix}
K^\star_{(2,m,n,0),(2,m',n',0)} & 0 \\ 0 & K^\star_{(3,m,n,0),(3,m',n',0)} 
\end{pmatrix}
= \begin{pmatrix}
B^\star_0 & 0\\ 0 & C^\star_0 
\end{pmatrix}.
\end{align*}
Thus, one block provides Fourier coefficients for the basis functions $g_{m,n,0}^{(\star,2)}$. The other one yields the coefficients for $g_{m,n,0}^{(\star,3)}$. Hence, for some tangential fields, their surface curl vanishes. For others, the surface divergence is zero.

Next, the results for the different systems are described and compared. Among the obtained functions, every combination of radial and angular extrema were observed in the underlying numerical experiments. This means, there exist functions with one radial extremum and a large amount of angular extrema as well as vice versa and every combination in between. The results with respect to system I and III are very similar to each other. This is justified by the similarity of the used basis functions. 

\enlargethispage{.5cm}
The basis functions of system I contain the damping factor $\left( \tfrac{r}{\beta} \right)^n$, where $r$ denotes the radial variable. Clearly, the influence of this factor diminishes near the surface of the ball. This means that large values can be expected rather near or at the surface. Nonetheless, the obtained Slepian functions like those in \cref{SysIRad,SysITan} are well-localised in the specified region.

With respect to system III, the basis functions have two important influences. On the one hand, the factor $\left( \tfrac{r}{\beta} \right)^{n-1}$ has a similar effect on them as the respective one on system I. On the other hand, the basis functions $g_{m,0,0}^{(\mathrm{III}, i)}$ are singular at the origin. These basis functions are contained in the Fourier expansion of every normal Slepian function. To present an example, where the localisation region provides a higher degree of difficulty in the calculations, we chose a region in the interior of the ball that does not contain the origin. As \cref{SysIIIRad,SysIIITan} show, also system III provides well-localised vector fields.

\enlargethispage{.5cm}

Recall that system II contains no damping factor. Hence, the behaviour of the Jacobi polynomials $P_m^{(0,2)}$ at $-1$ has a major impact on the basis functions. Note that, in the used parameterisation, the argument $t=-1$ of $P_m^{(0,2)} (t)$ corresponds to the centre of the ball. Thus, the basis functions of system II attain high values in a neighbourhood of this point and are discontinuous for $n>0$ at the origin. The size of this neighbourhood decreases for increasing radial degrees of the functions. The illustrations in \cref{SysIIRad,SysIITan} show that, if less radial extrema are attained, the influence of the origin is hardly recognizable and can, therefore, be neglected. All in all, the presented figures show well-localised functions also with respect to system II.

The results of the vectorial Slepian functions on the ball can be summed up as follows. In general, well-localised functions on the ball are obtained. Among these functions, some have one radial and one angular extremum. Others attain several radial and/or angular extrema. The properties noticed in the illustrations are explained by the theoretical approach taken and are analogous to those of known Slepian functions on other domains. Note that the vectorial Slepians functions on the ball inherit the properties of the chosen basis system.

\paragraph{Evaluation of the Shannon number}

\begin{wraptable}{l}{3.5cm}
	\begin{tabular}{|r||r|r|r|}
	\hline
	 $\Theta$ & $S^\mathrm{I}$ & $S^\mathrm{II}$ & $S^\mathrm{III}$  \\
	\hline
	\hline
	$15^\circ$ & 20 & 22 & 21 \\
	\hline
	$25^\circ$ & 54 & 62 & 57\\ 
	\hline
	$35^\circ$ & 104 & 119 & 109\\ 
	\hline 
	$45^\circ$ & 169 & 193 & 177\\ 
	\hline
	\end{tabular} 
\caption{Approximate Shannon number of the diverse experiments.}
\label{ShannonNumberExp} 
\end{wraptable}

The experiments are chosen as described above. The results are as expected. \cref{EtaSysIShannon,EtaSysIIShannon,EtaSysIIIShannon} show the distribution of the eigenvalues in the various settings. Note that only the first 250 eigenvalues are shown. This is done to improve the visualisation of the Shannon number. With respect to the systems I, II and III, the distribution of the whole set of eigenvalues from each experiment regarding the Shannon number shows the same behaviour as seen in \cref{EtaSysI,EtaSysII,EtaSysIII}. One figure stands for one experiment. The eigenvalues of $P^\star$ and $Q^\star$ are not separated here. 

The approximate Shannon numbers of the experiments are given in \cref{ShannonNumberExp}. 
Obviously, the amount of well-localised vectorial Slepian functions on the ball increases if the size of the original partial cone increases as well. This is not surprising, because larger regions require more basis functions to cover the variability of all functions on such a subdomain. 
Moreover, the Shannon number corresponding to system II is larger than those for systems I and III, which becomes significant for bigger cones.
Further, the Shannon number draws a line between significant and insignificant eigenvalues at around 0.4 in each experiment. Hence, the Shannon number predicts the number of well-localised Slepian functions pretty well.  

\paragraph{Evaluation of the rotated vector fields}
In the presented experiment, the vectorial functions are rotated by equal Euler angles $\frac{\pi}{2}$.
The rotation of the vector fields only alters the Fourier coefficients of the Slepian functions. Therefore, the eigenvalues remain the same as in \cref{EtaSysI}. This means it suffices to calculate the Slepian functions only for a partial cone centred around the North Pole, as we did it here, and to compute then the Slepian functions for an arbitrarily located partial cone. The obtained functions are merely rotated versions of the functions on the original partial cone, due to the symmetry of the sphere.

 \cref{SysIRotRad,SysIRotTan} show some rotated functions of system I. Here, two functions with slightly lower eigenvalues are presented to give an idea of what vectorial Slepian functions on the ball can also look like. The properties, as for instance the directions of the vector fields, and the idiosyncrasies, as for example the influence of the surface of the ball on system I, of the functions certainly remain the same if they are rotated.

\section{Conclusion}

From the scope of certain tomographic problems on the ball such as the inverse MEG- and EEG-problem, the need for well-localised vectorial functions on the ball occurs. The difficulty therein is caused by the uncertainty principle. According to this, a function cannot be simultaneously perfectly localised in space and in frequency. Therefore, in this paper, bandlimited functions were constructed which are optimally space-localised.

This problem was approached in the following way. Three different vector bases on the ball were used to build the bandlimited Fourier expansion of a Slepian vector field. They all consist of Jacobi polynomials and vector spherical harmonics. These bases are called systems I, II and III. The localisation region was defined as a part of a cone whose apex is situated in the origin. Following the original approach towards Slepian functions, the optimisation problem was altered into a finite-dimensional algebraic eigenvalue problem. The entries of the corresponding matrix were treated analytically as far as possible. The eigenvalue problem decouples into a normal and a tangential problem. For the remaining integrals and the eigenvalue problems, the GNU Scientific Library was applied. The number of well-localised vector fields was estimated by a Shannon number before the actual computation of the functions. This estimate mainly depends on the maximal radial and angular degree of the basis functions as well as the size of the localisation region.

The results of this approach towards vectorial Slepian functions on the ball can be summed up as follows. Regarding all three systems, well-localised functions on the ball were obtained. Among these functions, some have one radial and one angular extremum. Others attain several radial and/or angular extrema. Their visible properties can be explained by the theoretical approach taken. System I provides Slepian functions which tend to be concentrated near the surface of the ball. On the other hand, the origin has a major impact on the functions of system II. The resulting Slepian vector fields are concentrated there, if the localisation region is chosen close to the origin. At last, system III is influenced by the origin as well as by the surface of the ball. As this system is similarly constructed as system I, its results resemble the ones obtained from system I.

Thus, regardless of the chosen system, the vectorial Slepian functions on the ball are well-localised such that 
they appear to be suitable for applications where local vectorial phenomena (like currents) are analysed or modelled. Furthermore, the Shannon number predicts the number of well-localised vectorial Slepian functions on the ball pretty well.

In a forthcoming application, we will address the numerical regularisation of the inverse EEG-MEG-problem based on greedy algorithms developed in \cite{Fischer, DoreenDiss, Michel2015Handbook, MichelOrzlowski2017, MichelTelschow2016,  RogerDiss}. These algorithms require the choice of a dictionary with suitable trial functions. This dictionary is typically chosen to be overcomplete. For instance, Slepian functions and their rotated versions for other cones could be combined to better reveal the sources of an inverse problem. In this respect, we hope to improve the localisation of the detected neural currents by the inclusion of our Slepian functions in the dictionary.

\paragraph{Acknowledgement}
The authors gratefully acknowledge the financial support by the German Research Foundation (DFG), projects MI 655/10-1 and MI 655/7-2.

\appendix

\section{The Entries of the Localisation Matrix} \label{EntriesApp}

Let the localisation region $R$ be the original partial cone as given in \cref{localRegion}. Further, the basis functions from \cref{gmnj} are used. For the spherical harmonics, fully normalised spherical harmonics as defined in \cref{FNSH} are chosen. The Slepian functions are bandlimited. This means, the radial and angular degrees $m$ and $n$, respectively, are finite, i.e.\ $m=0, \dots, M$ and $n=0_i,\dots,N$ for some $M \in \uint_0$ and $N \in \uint_{0_i}$. The order $j$ is bounded by $-n \leq j \leq n$ for each $n$. The entries of the corresponding localisation matrix $K^\star$ are given in \cref{MatrixEntries}. In this appendix, these values are derived.

For $\lon \in [0, 2\pi[$ and $t \in [-1, 1]$, an established local orthonormal basis of $\real^3$ in terms of spherical coordinates is given by 
\begin{align*}
\era ( \lon, t ) = \left( \begin{matrix} \sqrt{1-t^2} \cos ( \lon ) \\ \sqrt{1-t^2} \sin ( \lon ) \\ t \end{matrix} \right),\
\ephi ( \lon, t ) = \left( \begin{matrix} - \sin ( \lon ) \\ \cos ( \lon ) \\ 0 \end{matrix} \right),\
\ete ( \lon, t ) = \left( \begin{matrix} -t \cos ( \lon ) \\ -t \sin ( \lon ) \\ \sqrt{1-t^2} \end{matrix} \right). 
\end{align*} 
For an illustration, see \cite[p.~86]{Michel2013}. With this basis, formulations of the surface gradient operator $\nabla^*$ and the surface curl $\mathrm{L}^*$ are given by
\begin{align}
\nabla^* &= \ephi \frac{1}{\sqrt{1-t^2}} \pdervlon + \ete \sqrt{1-t^2} \pdervt \notag
\intertext{and}
\mathrm{L}^* &= -\ephi \sqrt{1-t^2} \pdervt + \ete \frac{1}{\sqrt{1-t^2}} \pdervlon\ , \label{SurfCurl}
\end{align}
see, for example, \cite[p.~38]{FreedenSchreiner2009}. The vector $\era$ can also be used for the polar coordinate representation $x = r\era(\lon,t)$ of $x \in \real^3$ in the polar coordinates $(r, \lon,t)$, confer \cref{PolarParam}. The Jacobian of this parameterisation equals $r^2$. It is obtained by straight forward calculations. Thus, the integral of an arbitrary function $F \colon R \to \real$ over the original partial cone $R \subset \ball$ is given by
\begin{align*}
\int_{R}^{} F(x)\ \mathrm{d}x = \int_{a}^{b}\ \int_{0}^{2\pi}\ \int_{\cos \left( \Theta \right)}^{1}\ F(x(r,\lon,t))\ r^2\ \mathrm{d}t\  \mathrm{d}\lon\ \mathrm{d}r.
\end{align*}

Now consider the localisation matrix $K^\star$. In general, the matrix entries have the form 
\begin{align*}
K^\star_{(i,m,n,j), (i',m',n',j')} = \int_{R}^{} g_{m,n,j}^{(\star,i)} (x) \cdot g_{m',n',j'}^{(\star,i')}(x)\ \mathrm{d}x
\end{align*}
as seen in \cref{LocMat}. Type 1 is orthonormal to both types 2 and 3 also on all subsets of $\ball$. Thus, the localisation problem decouples into a normal part for equal types $i = i' = 1$ and a tangential part for types $i, i' \in \{ 2,3 \}$ as also seen in \cref{LocMat}.
For arbitrary $i$ and $i'$, the entries of $K^\star$ can be formulated as
\begin{align}
K^\star_{(i,m,n,j), (i',m',n',j')} &= \int_{R}^{}\ g_{m,n,j}^{(\star,i)}(x) \cdot g_{m',n',j'}^{(\star,i')}(x)\ \mathrm{d}x \notag \\
&=  T_{m,m',n,n'}^\star\ \int_{0}^{2\pi}\ \int_{\cos \left( \Theta \right)}^{1}\ y_{n,j}^{(i)}(\xi(\lon, t)) \cdot y_{n',j'}^{(i')}(\xi(\lon, t))\ \mathrm{d}t\ \mathrm{d}\lon \label{RadTimesAngInt}
\end{align}  where $T_{m,m',n,n'}^\star$ depends on the choice of the system. It is given by
\begin{align*}
T_{m,m',n,n'}^{\mathrm{I}} &=  \sqrt{\frac{4m+2n+3}{\beta^3}}\ \sqrt{\frac{4m'+2n'+3}{\beta^3}} \notag \\
& {} \times \int_{a}^{b}\ P_m^{(0,n+1/2)}\left( \frac{2r^2}{\beta^2} - 1 \right)  P_{m'}^{(0,n'+1/2)}\left( \frac{2r^2}{\beta^2} - 1 \right) \left( \frac{r}{\beta} \right)^{n+n'}  r^2\ \mathrm{d}r\
\intertext{for system I,}
T_{m,m',n,n'}^{\mathrm{II}} &= \sqrt{\frac{2m+3}{\beta^3}}\ \sqrt{\frac{2m'+3}{\beta^3}}\  \int_{a}^{b}\ P_m^{(0,2)} \left( \frac{2r}{\beta} - 1 \right) P_{m'}^{(0,2)} \left( \frac{2r}{\beta} - 1 \right)  r^2\ \mathrm{d}r\
\intertext{for any choice of $n$ and $ n'$ with respect to system II and}
T_{m,m',n,n'}^{\mathrm{III}} &=  \sqrt{\frac{4m+2n+1}{\beta^3}}\ \sqrt{\frac{4m'+2n'+1}{\beta^3}}\ \notag \\
& {} \times \int_{a}^{b}\ P_m^{(0,n-1/2)}\left( \frac{2r^2}{\beta^2} - 1 \right) P_{m'}^{(0,n'-1/2)}\left( \frac{2r^2}{\beta^2} - 1 \right) \left( \frac{r}{\beta} \right)^{n+n'-2}   r^2\ \mathrm{d}r\
\end{align*} 
for system III. The substitution 
\begin{align} \label{SubsI}
r = \phi(u) = \beta\ \sqrt{\frac{u+1}{2}}\ , \quad \phi'(u) = \frac{\beta}{4}\ \sqrt{\frac{2}{u+1}}
\end{align} 
with respect to systems I and III, as well as the substitution
\begin{align} \label{SubsII}
r = \phi(u) = \frac{\beta(u+1)}{2}\ , \quad \phi'(u) = \frac{\beta}{2}
\end{align} 
with respect to system II, provides the formulation of $T_{m,m',n,n'}^\star$ as  

\begin{align*}
T_{m,m',n,n'}^{\mathrm{I}} &= \sqrt{\frac{(4m+2n+3)\ (4m'+2n'+3)}{2^{n+n'+5}}}\ \notag \\
& {} \times  \int_{\frac{2a^2}{\beta^2} -1}^{\frac{2b^2}{\beta^2} -1}\ P_m^{(0,n+1/2)}\left( u \right)  P_{m'}^{(0,n'+1/2)}\left( u \right) \left( u+1 \right)^{(n+n'+1)/2}\ \mathrm{d}u
\intertext{for system I,}
T_{m,m',n,n'}^{\mathrm{II}} &= \sqrt{ \frac{ (2m+3)\ (2m'+3)} {64} }\  \int_{\frac{2a}{\beta} -1}^{\frac{2b}{\beta} -1} P_m^{(0,2)} \left( u \right) P_{m'}^{(0,2)} \left( u \right) (u+1)^2\ \mathrm{d}u
\intertext{for any choice of $n$ and $n'$ with respect to system II and}
T_{m,m',n,n'}^{\mathrm{III}} &= \sqrt{\frac{ (4m+2n+1)\ (4m'+2n'+1) }{2^{n+n'+3}}}\ \notag \\
& {} \times \int_{\frac{2a^2}{\beta^2} -1}^{\frac{2b^2}{\beta^2} -1} P_m^{(0,n-1/2)}\left( u \right) P_{m'}^{(0,n'-1/2)}\left( u \right) \left( u+1 \right)^{(n+n'-1)/2}\ \mathrm{d}u
\end{align*} for system III. 
Note that with this formulation, $T_{m,m',n,n'}^\star$ obviously coincides with the term $a_{m,m',n,n'}^\star I_{m,m',n,n'}^\star$ from \cref{MatrixFactors}. Hence, we have \cref{RadTimesAngInt} in the form
\begin{align*}
K^\star_{(i,m,n,j), (i',m',n',j')} = a_{m,m',n,n'}^\star I_{m,m',n,n'}^\star\ \int_{0}^{2\pi}\ \int_{\cos \left( \Theta \right)}^{1}\ y_{n,j}^{(i)}(\xi(\lon, t)) \cdot y_{n',j'}^{(i')}(\xi(\lon, t))\ \mathrm{d}t\ \mathrm{d}\lon.
\end{align*}

Now consider the angular integral
\begin{align*} 
\int_{0}^{2\pi}\ \int_{\cos \left( \Theta \right)}^{1}\ y_{n,j}^{(i)}(\xi(\lon, t)) \cdot y_{n',j'}^{(i')}(\xi(\lon, t))\ \mathrm{d}t\ \mathrm{d}\lon.
\end{align*} 
Depending on the choice of $i$ and $i'$, there are four different cases to be discussed. In most of these cases, we need to consider the integral 
\begin{align}
\int_{0}^{2\pi}\  c_j(\lon) c_{j'}(\lon) \mathrm{d}\lon \coloneqq
\int_{0}^{2\pi}\ \left\{ \begin{matrix}
\sqrt{2}\cos \left(j\lon \right), & j < 0 \\
1, & j = 0\\
\sqrt{2}\sin \left(j\lon \right), & j > 0
\end{matrix} \right\}   
\left\{ \begin{matrix}
\sqrt{2}\cos \left(j'\lon \right), & j' < 0 \\
1, & j' = 0\\
\sqrt{2}\sin \left(j'\lon \right), & j' > 0
\end{matrix} \right\}\ \mathrm{d}\lon
= 2 \pi \delta_{jj'}
\label{trigInt}
\end{align}
with the same abbreviation as in \cref{FNSH} and the Kronecker Delta $\delta_{jj'}$.

\paragraph{Case 1}
At first, the case $i=i'=1$ is examined. Here, the integral is
\begin{align} 
\int_{0}^{2\pi}\ \int_{\cos \left( \Theta \right)}^{1}\ y_{n,j}^{(1)}(\xi(\lon, t)) \cdot y_{n',j'}^{(1)}(\xi(\lon, t))\ \mathrm{d}t\ \mathrm{d}\lon. \label{tmp18}
\end{align}
Inserting the definition of vector spherical harmonics of type 1 and of real fully normalised spherical harmonics as well as \cref{trigInt} in \cref{tmp18}, we obtain
\begin{align*}
\MoveEqLeft[3] \int_{0}^{2\pi}\ \int_{\cos \left( \Theta \right)}^{1}\ y_{n,j}^{(1)}(\xi(\lon, t)) \cdot y_{n',j'}^{(1)}(\xi(\lon, t))\ \mathrm{d}t\ \mathrm{d}\lon\\
= {} & \int_{0}^{2\pi}\ \int_{\cos \left( \Theta \right)}^{1}\ \left( \era Y_{n,j}(\xi(\lon, t)) \right) \cdot \left( \era Y_{n',j'}(\xi(\lon, t)) \right)\ \mathrm{d}t\ \mathrm{d}\lon \\
= {} & \int_{0}^{2\pi}\ \int_{\cos \left( \Theta \right)}^{1}\ Y_{n,j}(\xi(\lon, t))\ Y_{n',j'}(\xi(\lon, t))\ \mathrm{d}t\ \mathrm{d}\lon \\
= {} & \int_{0}^{2\pi}\ \int_{\cos \left( \Theta \right)}^{1}\ \left( b_{n,j} P_{n,|j|}(t)\ \frac{1}{\sqrt{2\pi}}\ c_j(\lon) \right) \left( b_{n',j'} P_{n',|j'|}(t)\ \frac{1}{\sqrt{2\pi}}\  c_{j'}(\lon) \right)\ \mathrm{d}t\ \mathrm{d}\lon  \\
= {} & b_{n,j} b_{n',j'} \int_{\cos \left( \Theta \right)}^{1}\ P_{n,|j|}(t)\ P_{n',|j'|}(t)\ \mathrm{d}t \ \frac{1}{2\pi}\ \int_{0}^{2\pi}\ c_{j}(\lon) c_{j'}(\lon)\ \mathrm{d}\lon \\
= {} & \delta_{jj'} b_{n,j} b_{n',j} \int_{\cos \left( \Theta \right)}^{1}\ P_{n,|j|}(t)\ P_{n',|j|}(t)\ \mathrm{d}t.
\end{align*}
In combination with the radial part, this yields the representation of the matrix entries $K^\star_{(1,m,n,j),(1,m',n',j')}$ as given in \cref{MatrixEntries}:
\begin{align*} 
\MoveEqLeft[3] K^{\star}_{(1,m,n,j),(1,m',n',j')} = a_{m,m',n,n'}^\star I_{m,m',n,n'}^\star  b_{n,j} b_{n',j} \delta_{jj'} \int_{\cos \left( \Theta \right)}^{1}\ P_{n,|j|}(t)\ P_{n',|j|}(t)\ \mathrm{d}t.
\end{align*}

\paragraph{Case 2}
The second case deals with $i$ and $i'$ both of type 2, this means the integral of the form
\begin{align} 
\int_{0}^{2\pi}\ \int_{\cos \left( \Theta \right)}^{1}\ y_{n,j}^{(2)}(\xi(\lon, t)) \cdot y_{n',j'}^{(2)}(\xi(\lon, t))\ \mathrm{d}t\ \mathrm{d}\lon. \label{tmp19}
\end{align}
The first simplifications are made with the use of the definition of vector spherical harmonics, the definition of their normalisation factor and Green's first surface identity:
\begin{align}
\MoveEqLeft[3] \int_{0}^{2\pi}\ \int_{\cos \left( \Theta \right)}^{1}\ y_{n,j}^{(2)}(\xi(\lon, t)) \cdot y_{n',j'}^{(2)}(\xi(\lon, t))\ \mathrm{d}t\ \mathrm{d}\lon \notag\\
= {} & \frac{1}{\sqrt{n (n+1) n' (n'+1)}} \int_{0}^{2\pi}\ \int_{\cos \left( \Theta \right)}^{1}\ \left( \nabla^\ast_\xi\ Y_{n,j}(\xi(\lon, t)) \right) \cdot \left( \nabla^\ast_\xi\ Y_{n',j'}(\xi(\lon, t)) \right)\ \mathrm{d}t\ \mathrm{d}\lon \notag\\
= {} & \frac{1}{\sqrt{n (n+1) n' (n'+1)}} \left(- \int_{0}^{2\pi}\ \int_{\cos \left( \Theta \right)}^{1}\ Y_{n,j}(\xi(\lon, t))\ \Delta_\xi^\ast\ Y_{n',j'}(\xi(\lon, t))\ \mathrm{d}t\ \mathrm{d}\lon \right. \notag\\ 
& {} \qquad + \left. \displaystyle\int_{\partial\mathcal{C}}^{}\ Y_{n,j}(\xi)\ \pdervnu\ Y_{n',j'}(\xi)\ \mathrm{ds}(\xi) \vphantom{\int_{0}^{2\pi}\ \int_{\cos \left( \Theta \right)}^{1}} \right),
\label{tmp50}
\end{align}
where $\partial\mathcal{C}$ indicates the boundary of the spherical cap and $\nu$ is the corresponding outer unit normal vector. Note that the spherical harmonics of degree $n'$ are the eigenfunctions of the Beltrami operator to the eigenvalue $-n'(n'+1)$, see, for example, \cite[pp.~123-124]{Michel2013}. Hence, the first summand rearranges to 
\begin{align}
\sqrt{\frac{n' (n'+1)}{n (n+1)}}\ \int_{0}^{2\pi}\ \int_{\cos \left( \Theta \right)}^{1}\  Y_{n,j}(\xi(\lon, t))\ Y_{n',j'}(\xi(\lon, t))\ \mathrm{d}t\ \mathrm{d}\lon.
\end{align}
The boundary $\partial\mathcal{C}$ is parameterised by
\begin{align} 
g \colon [0,2\pi] \to \real^3, \qquad \lon \mapsto \Big( \sin \left( \Theta \right) \cos (\lon),\ \sin \left( \Theta \right) \sin (\lon),\ \cos ( \Theta ) \Big)^\mathrm{T}.\label{tmp27}
\end{align}
The Euclidean norm of its derivative equals $\sin \left( \Theta \right)$. Further, the derivative of $Y_{n',j'}$ along the outer normal $\nu$ of $\partial\mathcal{C}$ is given by
\begin{align*}
\MoveEqLeft[3] \pdervnu\ Y_{n',j'}(\xi(\lon, t)) \\
= {} & \nu \cdot \nabla^\ast_\xi\ Y_{n',j'}(\xi(\lon, t)) \\
= {} & -\ete \cdot \nabla^\ast_\xi\ Y_{n',j'}(\xi(\lon, t))\\
= {} & -\ete \cdot \left( \ephi\ \frac{1}{\sqrt{1-t^2}}\ \pdervlon\ Y_{n',j'}(\xi(\lon, t)) + \ete\ \sqrt{1-t^2}\ \pdervt\ Y_{n',j'}(\xi(\lon, t))\ \right) \\
= {} & - \sqrt{1-t^2}\ \pdervt\ Y_{n',j'}(\xi(\lon, t)).
\end{align*}
The first step is based on the equality $\pdervnu = \nu \cdot \nabla^\ast_\xi$ as given in \cite[p.~41]{FreedenSchreiner2009}. In the case of the original partial cone, the outer normal of $\partial\mathcal{C}$ equals $-\ete$. At the boundary of the spherical cap, the polar distance $t$ attains the value $\cos \left( \Theta \right)$. Hence, the normal derivative is given by 
\begin{align*}
\left. \left( \pdervnu\ Y_{n',j'}(\xi(\lon,t)) \right) \right|_{t=\cos \left( \Theta \right)} = - \sin \left( \Theta \right) \left. \left( \pdervt\ Y_{n',j'}(\xi(\lon,t)) \right) \right|_{t=\cos \left( \Theta \right)}.
\end{align*}
Therefore, \cref{tmp50} is rearranged to
\begin{align}
& \qquad \sqrt{\frac{n' (n'+1)}{n (n+1)}}\ \int_{0}^{2\pi}\ \int_{\cos \left( \Theta \right)}^{1}\  Y_{n,j}(\xi(\lon, t))\ Y_{n',j'}(\xi(\lon, t))\ \mathrm{d}t\ \mathrm{d}\lon \notag\\ 
& - \frac{\sin^2 \left( \Theta \right)}{\sqrt{n (n+1) n' (n'+1)}}\ \int_{0}^{2\pi}\ Y_{n,j}(\xi(\lon, \cos \left( \Theta \right)))\ \left. \left( \pdervt\ Y_{n',j'}(\xi(\lon, t))\right)\right|_{t=\cos \left( \Theta \right)}\ \mathrm{d}\lon. \label{tmp20}
\end{align}
Thus, a preliminary form of \cref{tmp19} is provided by \cref{tmp20}. The integral of the first summand in \cref{tmp20} is examined in case 1. It has the form 
\begin{align*}
\MoveEqLeft[3] \sqrt{\frac{n' (n'+1)}{n (n+1)}}\ \int_{0}^{2\pi}\ \int_{\cos \left( \Theta \right)}^{1}\  Y_{n,j}(\lon,t)\ Y_{n',j'}(\lon,t)\ \mathrm{d}t\ \mathrm{d}\lon \\
& {} = \delta_{jj'} b_{n,j} b_{n',j}\ \sqrt{\frac{n' (n'+1)}{n (n+1)}}  \int_{\cos \left( \Theta \right)}^{1}\ P_{n,|j|}(t)\ P_{n',|j|}(t)\ \mathrm{d}t.
\end{align*}
The integral of the second summand needs to be discussed further. If the fully normalised spherical harmonics are inserted, the integral is given by
\begin{align*}
\MoveEqLeft[3] \int_{0}^{2\pi}\ Y_{n,j}(\xi(\lon, \cos \left( \Theta \right))) \left. \left( \pdervt\ Y_{n',j'}(\xi(\lon,t)) \right) \right|_{t=\cos \left( \Theta \right)}\ \mathrm{d}\lon\ \notag \\
= {} & \int_{0}^{2\pi}\ \left( b_{n,j} P_{n,|j|}(\cos \left( \Theta \right))  \frac{1}{\sqrt{2\pi}} c_j(\lon) \right)  \pdervt\ \left. \left( b_{n',j'} P_{n',|j'|}(t) \frac{1}{\sqrt{2\pi}} c_{j'}(\lon) \right) \right|_{t=\cos \left( \Theta \right)}\ \mathrm{d}\lon \\ 
= {} & b_{n,j} b_{n',j'} P_{n,|j|}(\cos \left( \Theta \right))  P'_{n',|j'|}(\cos \left( \Theta \right))\ \frac{1}{2\pi} \int_{0}^{2\pi}\ c_j(\lon) c_{j'}(\lon)\ \mathrm{d}\lon.
\end{align*}
Again, the use of \cref{trigInt} yields a Kronecker delta of $j$ and $j'$ for the latter integral. Hence, the second summand in \cref{tmp20} is given by
\begin{align*}
\MoveEqLeft[3] - \frac{\sin^2 \left( \Theta \right)}{\sqrt{n (n+1) n' (n'+1)}}\ \int_{0}^{2\pi}\ Y_{n,j}(\xi(\lon, \cos \left( \Theta \right)))\ \left. \left( \pdervt\ Y_{n',j'}(\xi(\lon, t)) \right) \right|_{t=\cos \left( \Theta \right)}\ \mathrm{d}\lon \notag \\
= {} &  - \delta_{jj'}\ \frac{\sin^2 \left( \Theta \right)}{\sqrt{n (n+1) n' (n'+1)}}\ b_{n,j} b_{n'j} P_{n,|j|}(\cos \left( \Theta \right))  P'_{n',|j|}(\cos \left( \Theta \right)). 
\end{align*}
All in all, this provides the form of the entries $K^\star_{(2,m,n,j),(2,m',n',j')}$ as given in \cref{MatrixEntries}:
\begin{align*} 
\MoveEqLeft[3] K^{\star}_{(2,m,n,j),(2,m',n',j')} = a_{m,m',n,n'}^\star I_{m,m',n,n'}^\star  b_{n,j} b_{n',j}  \delta_{jj'}\\
& {} \times \left( \sqrt{\frac{n' (n'+1)}{n (n+1)}}\ \int_{\cos \left( \Theta \right)}^{1}\ P_{n,|j|}(t)\ P_{n',|j|}(t)\ \mathrm{d}t \right. \\
& {} \quad - \left. \vphantom{\int_{\cos \left( \Theta \right)}^{1}} \frac{\sin^2 \left( \Theta \right)}{\sqrt{n (n+1) n' (n'+1)}}\ P_{n,|j|}(\cos \left( \Theta \right))\ P'_{n',|j|}(\cos \left( \Theta \right)) \right).
\end{align*}

\paragraph{Case 3}
The third case is the last case with equal $i$ and $i'$, i.e.\ $i=i'=3$. It discusses integrals of the form 
\begin{align*}
\int_{0}^{2\pi}\ \int_{\cos \left( \Theta \right)}^{1}\ y_{n,j}^{(3)}(\xi(\lon,t)) \cdot y_{n',j'}^{(3)}(\xi(\lon,t))\ \mathrm{d}t\ \mathrm{d}\lon.
\end{align*}

The equality of Case 2 and Case 3 was already seen in \cite[eq.~(46)]{Simons2014}. Hence, this setting yields equal submatrices $B^\star$ and $C^\star$ from \cref{LocMat} of $K^\star$:
\begin{align*}
K_{(3,m,n,j),(3,m',n',j')}  = K_{(2,m,n,j),(2,m',n',j')}.
\end{align*}


\paragraph{Case 4}
At last, the case of distinct types $i$ and $i'$ is considered. Due to the orthonormality of the basis functions from \cref{gmnj}, the mixed case deals with $i=2$ and $i'=3$. Note that the opposite case $i=3$ and $i'=2$ produces the transposed submatrix of this case because of the symmetry of the Euclidean inner product in \cref{LocMat}. This means, integrals of the form 
\begin{align*}
\int_{0}^{2\pi}\ \int_{\cos \left( \Theta \right)}^{1}\ y_{n,j}^{(2)}(\xi(\lon, t)) \cdot y_{n',j'}^{(3)}(\xi(\lon, t))\ \mathrm{d}t\ \mathrm{d}\lon
\end{align*}
are considered at this point. Inserting the definition of the vector spherical harmonics, we have
\begin{align*}
\frac{1}{\sqrt{n (n+1) n' (n'+1)}} \int_{0}^{2\pi}\ \int_{\cos \left( \Theta \right)}^{1}\ \nabla^*_\xi Y_{n,j}(\xi(\lon, t)) \cdot \mathrm{L}^*_\xi Y_{n',j'} (\xi(\lon, t))\ \mathrm{d}t\ \mathrm{d}\lon.
\end{align*}
For the next considerations, we abbreviate the quotient upfront with 
\begin{align}
c_{nn'} \coloneqq \frac{1}{\sqrt{n (n+1) n' (n'+1)}}\ .
\end{align}
We can extend the integral due to the orthogonality of the surface gradient operator and the surface curl, see for example \cite[p.~39,~(2.142)]{FreedenSchreiner2009}, and obtain
\begin{align*}
c_{nn'} & \left( \int_{0}^{2\pi}\ \int_{\cos \left( \Theta \right)}^{1}\ \nabla^*_\xi Y_{n,j}(\xi(\lon, t)) \cdot \mathrm{L}^*_\xi Y_{n',j'}(\xi(\lon, t))\ \mathrm{d}t\ \mathrm{d}\lon \right. \\ 
& \quad + \left. \int_{0}^{2\pi}\ \int_{\cos \left( \Theta \right)}^{1}\ Y_{n,j}(\xi(\lon, t))\ \nabla^*_\xi \cdot \mathrm{L}^*_\xi Y_{n',j'}(\xi(\lon, t))\ \mathrm{d}t\ \mathrm{d}\lon \right).
\end{align*}
With $F=Y_{n,j}$ and $g = \mathrm{L}^*_\xi Y_{n',j'}$, this equals
\begin{align*}
c_{nn'}  & \left( \int_{\mathcal{C}} ^{} \nabla^*_\xi F(\xi) \cdot g(\xi)\ \mathrm{d}\omega(\xi) + \int_{\mathcal{C}} F(\xi)\ \nabla^*_\xi \cdot g(\xi)\ \mathrm{d} \omega(\xi) \right) \\
&= c_{nn'}  \int_{\mathcal{C}} ^{} \nabla^*_\xi \cdot \Big( F(\xi)\  g(\xi) \Big)\ \mathrm{d}\omega(\xi)
\end{align*}
where $\mathcal{C}$ is the considered spherical cap. Due to the surface theorem of Gauß, see for example \cite[p.~116]{FreedenGutting2013}, we have
\begin{align*}
c_{nn'}  &\int_{\mathcal{C}} ^{} \nabla^*_\xi \cdot \Big( F(\xi) g(\xi) \Big)\ \mathrm{d}\omega(\xi) 
=  c_{nn'} \int_{\partial \mathcal{C}} ^{} \nu \cdot \Big( F(\xi) g(\xi) \Big)\ \mathrm{ds}(\xi)\\
&= c_{nn'}  \int_{\partial \mathcal{C}} ^{} F(\xi)\ g(\xi) \cdot \nu \ \mathrm{d}s(\xi).
\end{align*}
Hence, in our case, we have
\begin{align*}
\int_{0}^{2\pi}\ &\int_{\cos \left( \Theta \right)}^{1}\ y_{n,j}^{(2)}(\xi(\lon, t)) \cdot y_{n',j'}^{(3)}(\xi(\lon, t))\ \mathrm{d}t\ \mathrm{d}\lon \\
&= c_{nn'}  \int_{\partial \mathcal{C}}^{} Y_{n,j}(\xi)\ \left( \mathrm{L}^*_\xi Y_{n',j'}(\xi) \right) \cdot  \nu \ \mathrm{d} s(\xi).
\end{align*}
As mentioned before, the outer normal $\nu$ of the boundary of the spherical cap equals $-\ete$. Due to the local coordinate representation of the surface curl \cref{SurfCurl} and with the parameterisation of $\partial \mathcal{C}$ from \cref{tmp27}, we obtain
\begin{align*}
c_{nn'}  &\int_{\partial \mathcal{C}}^{} Y_{n,j}(\xi)\ \left( \mathrm{L}^*_\xi Y_{n',j'}(\xi) \right) \cdot  \nu \ \mathrm{d} s(\xi) \\
= {} & - c_{nn'}  \int_{\partial \mathcal{C}}^{} Y_{n,j}(\xi)\ \frac{1}{\sqrt{1-t^2}}\ \pdervlon\ Y_{n',j'}(\xi(\lon, t))\ \mathrm{d} s(\xi) \\
= {} & - c_{nn'} \int_{0}^{2\pi} Y_{n,j}(\xi(\lon, \cos \left( \Theta \right) ))\ \pdervlon\ Y_{n',j'}(\xi(\lon, \cos \left( \Theta \right)))\ \mathrm{d} \lon.
\end{align*}
The definition of the fully normalised spherical harmonics \cref{FNSH} shows that $$\pdervlon\ Y_{n',j'}(\xi(\lon, \cos \left( \Theta \right))) = j'\ Y_{n',-j'}(\xi(\lon, \cos \left( \Theta \right)).$$ Thus, we have 
\begin{align*}
\MoveEqLeft[3] - c_{nn'}  \int_{0}^{2\pi} Y_{n,j}(\xi(\lon, \cos \left( \Theta \right) ))\ \pdervlon\ Y_{n',j'}(\xi(\lon, \cos \left( \Theta \right)))\ \mathrm{d} \lon\\
= {} & - c_{nn'}  \int_{0}^{2\pi} Y_{n,j}(\xi(\lon, \cos \left( \Theta \right) ))\ j'\ Y_{n',-j'}(\xi(\lon, \cos \left( \Theta \right)))\ \mathrm{d} \lon.
\end{align*}
At this point, we can insert the definition of the fully normalised spherical harmonics.
\begin{align*}
\MoveEqLeft[3] - c_{nn'}  \int_{0}^{2\pi} Y_{n,j}(\xi(\lon, \cos \left( \Theta \right) ))\ j' Y_{n',-j'}(\xi(\lon, \cos \left( \Theta \right)))\ \mathrm{d} \lon \\
= {} & - \frac{j'}{\sqrt{n (n+1) n' (n'+1)}} \int_{0}^{2\pi} b_{n,j}\ P_{n,|j|} ( \cos \left(\Theta \right))\ \frac{1}{\sqrt{2\pi}}\ c_j(\lon) \\
& \hspace*{4.5cm} \times \  b_{n',-j'}\ P_{n,|-j'|} ( \cos \left(\Theta \right))\ \frac{1}{\sqrt{2\pi}}\ c_{-j'}(\lon)\  \mathrm{d} \lon \\
= {} & - \frac{j'}{\sqrt{n (n+1) n' (n'+1)}}\ b_{n,j}\ b_{n',-j'}\ P_{n,|j|} ( \cos \left(\Theta \right))\  P_{n,|-j'|} ( \cos \left(\Theta \right)) \\
& \hspace*{4.5cm} \times \  \frac{1}{2\pi}\ \int_{0}^{2\pi} c_j(\lon)\ c_{-j'}(\lon)\  \mathrm{d} \lon .
\end{align*}
With the use of \cref{trigInt}, the last line equals
\begin{align*}
\MoveEqLeft[3] - \frac{j' \delta_{j,-j'}}{\sqrt{n (n+1) n' (n'+1)}}\ b_{n,j}\ b_{n',-j'}\ P_{n,|j|} ( \cos \left(\Theta \right))\  P_{n,|-j'|} ( \cos \left(\Theta \right)) \\
= {} & \frac{j \delta_{-j,j'}}{\sqrt{n (n+1) n' (n'+1)}}\ b_{n,j}\ b_{n',-j'}\ P_{n,|j|} ( \cos \left(\Theta \right))\  P_{n,|-j'|} ( \cos \left(\Theta \right)).
\end{align*}
In combination with the respective radial integral, this yields the matrix entries as given in \cref{MatrixEntries} (note that $b_{n',-j'} = b_{n',j'}$):
\begin{align*} 
\MoveEqLeft[3] K^{\star}_{(2,m,n,j),(3,m',n',j')} \notag \\ 
= {} & a_{m,m',n,n'}^\star I_{m,m',n,n'}^\star b_{n,j}\ b_{n',-j}\  \frac{j\delta_{-j,j'}}{\sqrt{n (n+1) n' (n'+1)}}\ P_{n,|j|}(\cos \left( \Theta \right))\  P_{n',|j'|}(\cos \left( \Theta \right)).
\end{align*}

Hence, \cref{MatrixEntries} is proven. $\hfill \Box$

\bibliographystyle{abbrv}
\bibliography{literature}

\end{document}